\theoremstyle{plain}
\newtheorem{theorem}{Theorem}
\newtheorem{lemma}{Lemma}
\newtheorem{prop}{Proposition}
\newtheorem*{remark*}{Remark}
\theoremstyle{definition}
\newtheorem{defi}{Definition}
\newtheorem{example}{\bf Example}
\theoremstyle{remark}
\newtheorem{remark}{\bf Remark}
\patchcmd{\thmhead}{(#3)}{#3}{}{}
\renewenvironment{proof}[1][\proofname]{
  \par\pushQED{\qed}\normalfont
  \topsep6\p@\@plus6\p@\relax
  \trivlist\item[\hskip\labelsep\bfseries\itshape #1\@addpunct{.}]
}{\popQED\endtrivlist\@endpefalse}
\let\phi\varphi
\DeclareMathOperator{\disc}{\mathrm{disc}}
\DeclareMathOperator{\Q}{\mathbb{Q}}
\DeclareMathOperator{\E}{\EuScript{E}}
\DeclareMathOperator{\Tt}{\mathbb{T}}
\DeclareMathOperator{\G}{\mathbb{G}}
\DeclareMathOperator{\Sym}{\mathrm{Sym}}
\DeclareMathOperator{\nGrp}{\mathit{n}-\mathcal{G}\mathrm{rp}}
\DeclareMathOperator{\inv}{\mathrm{inv}}
\DeclareMathOperator{\Aut}{\mathrm{Aut}}
\DeclareMathOperator{\Z}{\mathbb{Z}}
\DeclareMathOperator{\Cc}{\mathbb{C}}
\DeclareMathOperator{\R}{\mathbb{R}}
\DeclareMathOperator{\Oo}{\EuScript{O}}
\DeclareMathOperator{\diag}{\mathrm{diag}}
\DeclareMathOperator{\Mat}{\mathrm{Mat}}
\DeclareMathOperator{\Tr}{\mathrm{Tr}}
\DeclareMathOperator{\eps}{\varepsilon}
\DeclareMathOperator{\M}{\mathbb{M}}
\DeclareMathOperator{\mult}{\mathrm{mult}}
\DeclareMathOperator{\node}{\mathrm{node}}
\DeclareMathOperator{\cusp}{\mathrm{cusp}}
\DeclareMathOperator{\W}{\mathbb{W}}
\DeclareMathOperator{\CP}{\mathbb{C}P}
  \DeclareSymbolFont{ntxletters}{OML}{ntxmi}{m}{it}
  \re@DeclareMathSymbol{\leftharpoonup}{\mathrel}{ntxletters}{"28}
  \re@DeclareMathSymbol{\leftharpoondown}{\mathrel}{ntxletters}{"29}
  \re@DeclareMathSymbol{\rightharpoonup}{\mathrel}{ntxletters}{"2A}
  \re@DeclareMathSymbol{\rightharpoondown}{\mathrel}{ntxletters}{"2B}
  \re@DeclareMathSymbol{\triangleleft}{\mathbin}{ntxletters}{"2F}
  \re@DeclareMathSymbol{\triangleright}{\mathbin}{ntxletters}{"2E}
  \re@DeclareMathSymbol{\partial}{\mathord}{ntxletters}{"40}
  \re@DeclareMathSymbol{\flat}{\mathord}{ntxletters}{"5B}
  \re@DeclareMathSymbol{\natural}{\mathord}{ntxletters}{"5C}
  \re@DeclareMathSymbol{\star}{\mathbin}{ntxletters}{"3F}
  \re@DeclareMathSymbol{\smile}{\mathrel}{ntxletters}{"5E}
  \re@DeclareMathSymbol{\frown}{\mathrel}{ntxletters}{"5F}
  \re@DeclareMathSymbol{\sharp}{\mathord}{ntxletters}{"5D}
  \re@DeclareMathAccent{\vec}{\mathord}{ntxletters}{"7E}
\renewcommand{\epsilon}{\varepsilon}
\DeclareFontFamily{U}{EBSUB}{}
\DeclareFontShape{U}{EBSUB}{m}{it}{<-> EBGaramond-Italic-tlf-lgr}{}
\DeclareFontFamily{U}{EBSUB}{}
\DeclareSymbolFont{EBSUB}{U}{EBSUB}{m}{it}
\DeclareMathSymbol{\mu}{\mathord}{EBSUB}{`m}
\DeclareMathSymbol{\varDelta}{\mathord}{EBSUB}{`D}
\DeclareMathSymbol{\varOmega}{\mathord}{EBSUB}{`W}
\title{\bf Algebraic $\bm{n}$-Valued Monoids on $\bm{\mathbb{C}P^1}$,\\ 
Discriminants and Projective Duality}
\date{}
\author{Victor Buchstaber and Mikhail Kornev}
\begin{document}

\maketitle

\begin{abstract}
In this work, we establish connections between the theory of algebraic $n$-valued monoids and groups and the theories of discriminants and projective duality. We show that the composition of projective duality followed by the Möbius transformation $z\mapsto 1/z$ defines a shift operation $\mathbb{M}_n(\mathbb{C}P^1)\mapsto \mathbb{M}_{n-1}(\mathbb{C}P^1)$ in the family of algebraic $n$-valued coset monoids $\{\mathbb{M}_{n}(\mathbb{C}P^1)\}_{n\in\mathbb{N}}$. We also show that projective duality sends each Fermat curve $x^n+y^n=z^n$ $(n\ge 2)$ to the curve $p_{n-1}(z^n; x^n, y^n)=0$, where the polynomial $p_n(z;x,y)$ defines the addition law in the monoid $\M_n(\Cc\!P^1)$. We solve the problem of describing coset $n$-valued addition laws constructed from cubic curves. As a corollary, we obtain that all such addition laws are given by polynomials, whereas the addition laws of formal groups on general cubic curves are given by series.
\end{abstract}

\tableofcontents

\thispagestyle{empty}

\section{Introduction}\label{sec:intro}\label{intro}

In \cite{Kontsevich_type_polynomials, Gaiur}, the addition laws of algebraic $n$-valued groups on $\Cc\!P^1$ \cite{Buchstaber} were expressed in terms of discriminants. In the present work, we develop the connection between the theory of algebraic $n$-valued monoids and $n$-valued groups on $\Cc\!P^1$ and the theories of discriminants and projective duality \cite{Gelfand}. We use algebro-geometric methods without invoking the theory of elliptic functions.

An {\it algebraic $n$-valued monoid} is an algebraic variety $X$ with an associative $n$-valued multiplication defined by a rational morphism $X\times X\to \Sym^n(X)$ and with a neutral element (identity) $e\in X$, i.e. $x\ast e$ and $e\ast x$ exist and the equalities $x\ast e = e\ast x = [x, x, ..., x]$ hold for every $x\in X$. An {\it algebraic $n$-valued group} is an algebraic $n$-valued monoid on $X$ with a rational inversion morphism $\inv: X\to X$ such that, whenever $\inv(x)$ exists, the following conditions hold: $x\ast\inv(x)$ and $\inv(x)\ast x$ exist, and $e\in x\ast\inv(x)$, $e\in \inv(x)\ast x$.

An algebraic $n$-valued monoid (or group) on $X$ is called {\it regular} if the $n$-valued multiplication $X\times X\to \Sym^n(X)$ is defined on the whole variety $X\times X$ (and the map $\inv$ is defined everywhere on $X$ in the case of a group). An $n$-valued group $X$ is called {\it involutive} if the map $\inv$ is uniquely defined and $\inv(x) = x$ for every $x\in X$. 

Let
$$\delta_{\boldsymbol{a}} = \disc_t(t^3 + a_1t^2 + a_2t + a_3)$$
be the discriminant. It was shown in \cite[Theorem 6.3]{Buchstaber_Veselov19} that, for any choice of complex parameters $\boldsymbol{a} = (a_1, a_2, a_3)$, the Buchstaber polynomials
\begin{equation}\label{B}
B_{\boldsymbol{a}}(z; x, y) = (x + y + z - a_2xyz)^2 - 4(1 + a_3xyz)(xy + yz + xz + a_1xyz)
\end{equation}
define the structure of the universal symmetric $2$-algebraic $2$-valued group $\G_{\Cc}(B_{\boldsymbol{a}})$ on $\Cc$, with multiplication
$$x\ast y = \{\,z\mid B_{\boldsymbol{a}}(z; x, y)= 0\,\},$$
zero element $0$, and inverse $\inv(x) = x$. According to the recent paper \cite[Theorem 4.7]{Kontsevich_type_polynomials}, for $\delta_{\boldsymbol{a}}\neq 0$, the law \eqref{B} defines on $\Cc\!P^1$ the structure of a regular $2$-valued algebraic group.   

As was observed in \cite[Theorem 4.7]{Kontsevich_type_polynomials}, the following equality holds:
$$D_{\boldsymbol{a}}(z; x, y) = (xyz)^2B_{\boldsymbol{a}}(-1/z; -1/x, -1/y),$$
where
$$D_{\boldsymbol{a}}(z; x, y) = \disc_t\bigl(t^3 + a_1 t^2 + a_2 t + a_3 - (t - x)(t - y)(t - z)\bigr)$$
is the generalized Kontsevich polynomial. 

Denote by $\G_n(\Cc)$ the coset \cite[Theorem 1]{Buchstaber} $n$-valued algebraic group on $\Cc$ with zero element $0$, inverse $\inv(x) = (-1)^nx$, and multiplication
$$x\ast y = [z\mid p_n(z; (-1)^nx, (-1)^ny) = 0],$$
where
\begin{equation}\label{polynomial_p_n_intro}
p_n(z; x, y) = \prod\limits_{r, s = 1}^{n}(\sqrt[n]{z} + \varepsilon^r\sqrt[n]{x} + \varepsilon^s\sqrt[n]{y})
\end{equation}
is a symmetric polynomial with integer coefficients, $\varepsilon = e^{2\pi i/n}$, and a fixed branch of $\sqrt[n]{-}$ is chosen. The group $\G_n(\Cc)$ was first considered in \cite{Buchstaber_Novikov}. 

In \S\ref{algebraic_nval_monoids_and_groups} of the present paper, we give the basic constructions, examples, and properties of algebraic $n$-valued monoids and groups.

In Theorems \ref{2-valued_group}, \ref{3-valued_group}, \ref{4-valued_group}, and \ref{6-valued_group}, we describe $n$-valued addition laws for $n=2$, $3$, $4$, and $6$ which are explicitly given by polynomials and are obtained by coset constructions from geometric addition laws on elliptic curves. In this connection, we note that in \cite{Bunkova_Buchstaber11} a single-valued addition law on elliptic curves in the general Weierstrass model was presented using Tate uniformization. This law is given by formal series. One consequence of our work is the fact that the $n$-valued laws corresponding to elliptic curves are given by polynomials.

The coset construction for groups {\normalfont \cite[Theorem 1]{Buchstaber}} carries over without difficulty to the case of monoids. We shall call an $n$-valued monoid $M_{H}$ constructed from a $1$-valued monoid $M$ and a subgroup $H$ of order $n$ of the group $\Aut(M)$ a {\it coset $n$-valued monoid}.  

Theorem \ref{2-valued_group} describes the classification of all $2$-valued coset groups and monoids on $\CP^1$ obtained by the coset construction from the group of points of an elliptic curve with respect to an involution. Although the content of part {\bf{(i)}} is not new, its method of proof is. This result was first obtained in \cite[Theorem 4.7]{Kontsevich_type_polynomials} using the theory of elliptic functions. According to this result, for $\delta_{\boldsymbol{a}}\neq 0$ (and only in this case), the universal $2$-valued group law $\G_{\Cc}(B_{\boldsymbol{a}})$ obtained in \cite[Theorem 6.3]{Buchstaber_Veselov19} extends to a $2$-valued coset algebraic regular involutive group $\G_{\Cc\!P^1}(B_{\boldsymbol{a}})$ on $\Cc\!P^1$ with zero element $0$ and multiplication $\mu_{\boldsymbol{a}}$. In \cite[Theorem 5.1]{Kontsevich_type_polynomials}, it was shown that the Möbius transformation $x\mapsto -1/x$, $y\mapsto -1/y$, $z\mapsto -1/z$ takes the group $\G_{\Cc\!P^1}(B_{\boldsymbol{a}})$ to an isomorphic group $\G_{\Cc\!P^1}(D_{\boldsymbol{a}})$ with zero element $\infty$ and multiplication defined by the Kontsevich polynomial $D_{\boldsymbol{a}}(-z; -x, -y)$. The group $\G_{\Cc\!P^1}(D_{\boldsymbol{a}})$ coincides with the coset group $\E_{\langle\sigma\rangle}$, where
\begin{equation}\label{E_cubic}
\E = \{\,\eta^2 = \zeta^3 + a_1\zeta^2 + a_2\zeta + a_3\,\}
\end{equation}
is an elliptic curve and
$$\begin{aligned}
\sigma: (\zeta, \eta)&\mapsto (\zeta, -\eta)\\
\infty&\mapsto\infty
\end{aligned}$$
is an involution. Part {\bf (ii)} of Theorem \ref{2-valued_group} is new and states that the groups $\G_{\Cc\!P^1}(B_{\boldsymbol{a}})$ are classified by the $j$-invariant of the elliptic curve $\E$.

We shall call an element $w$ of an $n$-valued monoid (group) $\M$ {\it iterating} (for $n = 2$, {\it doubling}) if, for every $m\in\M$, the multisets $m\ast w$ and $w\ast m$ contain points of multiplicity at least $2$. For $n = 2$, the set of doubling points forms a $2$-valued diagonal submonoid (subgroup). If an $n$-valued algebraic group is given in some chart $U$ by the roots in $z$ of a certain polynomial $P(z; x, y)$, then an element $y$ is iterating if and only if $y$ is a root of the discriminant $\disc_z(P(z; x, y))$ for every $x\in U$. Thus, the discriminant $\disc_z(P(z; x, y))$ of the law $P(z; x, y)$ carries important information about the structure of the $n$-valued algebraic monoid. In Theorem \ref{doubling_group_for_elliptic_curves}, it is shown that for $\delta_{\boldsymbol{a}}\neq 0$, the group of doubling points of the $2$-valued group $\G(B_{\boldsymbol{a}})$ is isomorphic to the Klein four-group $\Z/2\times\Z/2$. {By Proposition \ref{iterating_3-val}, the set of iterating elements of the $3$-valued group $\G_{3, c}(\Cc\!P^1)$ is, as a $3$-valued subgroup, the diagonal construction of a $1$-valued group isomorphic to $\Z/3$. By Proposition \ref{iterating_4-val}, the iterating elements of the $4$-valued group $\G_{4, b}(\Cc\!P^1)$ form a $4$-valued subgroup isomorphic to the group $\diag_2((\Z/4)/{\sigma})$, where $\diag_2$ denotes the double diagonal and $(\Z/4)/{\sigma}$ is the two-valued coset group constructed from the additive group $\Z/4$ and the involution $\sigma:g\mapsto -g$ for all $g\in\Z/4$.}

As already noted, Theorems \ref{3-valued_group}, \ref{4-valued_group}, and \ref{6-valued_group} explicitly describe the polynomials defining multiplication in all possible, up to isomorphism, coset $3$-, $4$-, and $6$-valued groups $\G_{3, c}(\Cc\!P^1)$, $\G_{4, b}(\Cc\!P^1)$, and $\G_{6, c}(\Cc\!P^1)$ on $\Cc\!P^1$ modeled by elliptic curves and automorphisms. They correspond to equianharmonic ($j = 0$, $\Aut \E(\Cc) \cong \Z/6$) and harmonic ($j = 1728$, $\Aut \E(\Cc)\cong \Z/4$) elliptic curves $\E$. 

In the nodal case of the cubic $\E$, according to Theorem \ref{nodal_monoids}, the coset group $\G_{\Cc\!P^1}(D_{\boldsymbol{a}})$ becomes, up to isomorphism, the coset monoid $\M_{\node}(\Cc\!P^1)$.

In the cuspidal case of the cubic $\E$, according to Theorem \ref{cusp_monoids}, the coset groups
$$\G_{\Cc\!P^1}(D_{\boldsymbol{a}}),\ \G_{3,c}(\Cc\!P^1),\ \G_{4, b}(\Cc\!P^1),\ \G_{6, c}(\Cc\!P^1)$$
become, up to isomorphism, the coset monoids
$$\M_2(\Cc\!P^1),\ \M_3(\Cc\!P^1),\ \M_4(\Cc\!P^1),\ \M_6(\Cc\!P^1),$$
respectively.

{In \cite{BK}, the notion of \textit{symmetric $n$-algebraic $n$-valued group} was introduced. For $n = 3$, the full classification was obtained. We note that, to date, no complete classification of $n$-algebraic $n$-valued groups is known for $n>3$.} 

Theorem \ref{Fermat_curve_theorem} states that, under projective duality, the Fermat curve $\{\,x^n+y^n=z^n\,\}$ is sent to the curve $\{\,p_{n-1}(z^n;x^n,y^n)=0\,\}$.

Theorem \ref{M_n_monoid_structure} shows that the structure of the algebraic $n$-valued coset group $\G_n(\Cc)$ extends only to the structure of an algebraic $n$-valued monoid $\M_n(\Cc\!P^1)$ on $\Cc\!P^1$. Here the point $\infty$ is absorbing, that is,
\[
\infty\ast x=x\ast\infty=[\infty,\infty,\dots,\infty]
\quad\text{for every }x\in \Cc\!P^1\setminus\{\,\infty\,\}.
\]

For every positive integer $n$, the polynomial $p_n$ defines a curve
$$X_n=\{\,p_n(z;x,y)=0\,\}$$
in $\Cc\!P^2$. By Theorem \ref{shift_duality_monoids}, under projective duality the curve $X_n$ $(n\ge 2)$ is sent to
\[
X_n^{\vee}=\{\,(uvw)^{n-1}p_{n-1}(1/w;1/u,1/v)=0\,\}\subset (\Cc\!P^2)^{\ast},
\]
and the composition of the duality $X_n\mapsto X_n^{\vee}$ with the subsequent Möbius transformation $(u,v,w)\mapsto (1/u,1/v,1/w)$ defines the shift operation $\M_n(\Cc\!P^1)\mapsto \M_{n-1}(\Cc\!P^1)$ in the family of algebraic $n$-valued monoids. It follows from the Plücker formulas \cite[Proposition 2.4]{Gelfand} that if $X$ is a smooth curve of degree $n$, then the curve $X^{\vee}$ has degree $n(n-1)$. In our case, $X_n$ and $X_n^{\vee}$ are singular for $n \ge 3$, with $\deg X_n = n$ and $\deg X_n^{\vee} = (n -1)^2$. The curves $X_2$ and $X_2^{\vee}$ are nonsingular; see Example \ref{X_2}. 

Recall that, according to \cite[Theorem 2.3]{Gaiur}, the polynomial $p_n(z;x,y)$ and the discriminant $\disc_t(P_{x,y,z}(t))$ of the polynomial
$$P_{x,y,z}(t)=(-1)^n x\,t^{\,n-1}(1+t)^{n-1}+(-1)^n y(1+t)^{n-1}-t^{\,n-1}z$$
with respect to the variable $t$ satisfy the equality
\begin{equation}\label{disc_equality}
(-1)^n (n-1)^{2(n-1)} (xyz)^{n-2} p_n(z;x,y)=\disc_t\bigl(P_{x,y,z}(t)\bigr).
\end{equation}
For an explicit proof, see \cite[Theorem 8]{BK}. Theorems \ref{Fermat_curve_theorem}, \ref{M_n_monoid_structure}, and \ref{shift_duality_monoids} explain \eqref{disc_equality} using the theory of \cite{Gelfand}, which relates discriminants and projective duality.

The iterations of the $n$-valued addition in $\G_n(\Cc)$ are defined by the symmetric polynomials
\[
p_{n,m}(z;\bm{x})=\prod_{k_1,\dots,k_m=1}^{n}\bigl(\sqrt[n]{z}+\varepsilon^{k_1}\sqrt[n]{x_1}+\dots+\eps^{k_m}\sqrt[n]{x_m}\bigr),
\]
which arise, for example, in connection with Picard--Fuchs differential equations \cite[Section 3]{Gaiur}. Denote by $\Oo_{n,m}(\Cc\!P^1)$ the variety $\Cc\!P^1$ equipped with this operation. Let $X_{n,m}=\{\,p_{n,m}=0\,\}$ be a hypersurface in $\Cc\!P^m$, and put
$$P_{n,m}(w; \bm{u})=(u_1\cdots u_m w)^{n-1}\, p_{n-1}(w^{-1};u_1^{-1},\dots,u_m^{-1}).$$
Then Theorem \ref{theorem_3} states that the composition of duality, for $m\ge 2$ and $n\ge 2$,
$$X_{n,m}\mapsto X_{n,m}^{\vee}=\{\,P_{n,m}=0\,\}\subset (\Cc\!P^m)^{\ast}$$
with the subsequent Möbius transformation
$$(u_1,\dots,u_m,w)\mapsto (1/u_1,\dots,1/u_m,1/w)$$
defines the shift operation
$$\Oo_{n,m}(\Cc\!P^1)\mapsto \Oo_{n-1,m}(\Cc\!P^1)$$
in the family of $m$-ary $n^{m-1}$-valued algebraic structures $\Oo_{n,m}(\Cc\!P^1)$.

Theorem \ref{F_nm_hypersurfaces} is an iterated analogue of Theorem \ref{Fermat_curve_theorem}. It gives a concrete realization
$$F_{n,m}^{\vee} = \{\, p_{n-1, m}(w^n; u_1^n, ..., u_m^n) = 0\,\}$$
as an explicit equation for the hypersurface dual to the Fermat hypersurface
$$F_{n, m} = \{\, x_1^n + ... + x_m^n = z^n\, \}.$$

Section \ref{projective_duality_and_the_family_of_monoids} is devoted to these results.

The main result of Section \ref{p_n_laws_and_field_extension_discriminants}, Proposition \ref{p_n_as_field_extension_discriminant}, describes the polynomial $p_n(z; x, y)$ as the discriminant of a certain field extension.

The authors are grateful to Vladimir Rubtsov for helpful discussions during the preparation of this work.

\section
[Basic Concepts of the Theory of $\texorpdfstring{\boldsymbol{n}}{n}$-Valued Groups]
{Basic Concepts of the Theory of $\boldsymbol{n}$-Valued Groups}
\label{Basic_concepts_of_n_val}

In this section we give several definitions and examples, following \cite{Buchstaber}.

\begin{defi}\label{nval_group_def}
An $n$-valued multiplication on a set $X$ is a map

$$\mu: X\times X \to \Sym^n(X)\,:\,\mu(x,y)=x*y,$$ where $\Sym^n(X):=X^n/\Sigma_n$ is the $n$-th symmetric power of the set $X$ (i.e. the set consisting of multisets $[z_1, ..., z_n]$), such that the following conditions hold:

\begin{itemize}

\item {\it Associativity.} The following $n^2$-multisets
\begin{gather*}
[x*w\mid w\in y \ast z],\\
[w\ast z\mid w\in x*y]
\end{gather*} 
coincide.

\item {\it Identity.} There exists an element $e\in X$ such that $e*x=x*e=[x,x,\dots,x]$\; for every\; $x\in X$.

\item {\it Inverse element.} There is a map $\mathrm{inv}\colon X\to X$ with the properties
$$e\in \mathrm{inv}(x)* x,\ e \in x*\mathrm{inv}(x)$$ for every $x\in X$.
\end{itemize}
\end{defi}

\begin{defi}
If a set $X$ is equipped with an $n$-valued multiplication, then $X$ is called an {\it $n$-valued group}.
\end{defi}

\begin{example}
Every $1$-valued group $G$ is an ordinary group.   
\end{example}

\begin{example}\label{G_n_defi}
On the set $\Cc$ of complex numbers there is a structure of an $n$-valued group $\G_n(\Cc)$. Consider the operation $\ast: \Cc\times\Cc\to \Sym^n(\Cc)$ which assigns to every pair of complex numbers $x$ and $y$ the multiset
\begin{equation}\label{operation2}
x\ast y = [(\sqrt[n]{x} + \epsilon^r\sqrt[n]{y})^n\mid r = 1, ..., n]
\end{equation}
of $n$ complex numbers, where $\epsilon$ is some primitive $n$-th root of unity and $\sqrt[n]{\cdot}$ denotes some fixed branch of the root. The operation $\ast$, the identity $0$, and the inverse $\inv(x) = (-1)^nx$ endow the set of complex numbers $\Cc$ with the structure of an $n$-valued group.
\end{example}

\begin{defi}
An $n$-valued group on a set $X$ is called {\it commutative} if $x_1\ast x_2 = x_2\ast x_1$ for all $x_1, x_2\in X$.
\end{defi}

\begin{defi}
A map $f: X\to Y$ between $n$-valued groups is called a {\it homomorphism} if 

\begin{itemize}

\item $f(e_X)=e_Y$.

\item $f(\mathrm{inv}_X(x))=\mathrm{inv}_Y(f(x))$ for every $x\in X$.

\item $\mu_{Y}(f(x),f(y))=(f)^n\mu_X(x,y)$ for all $x, y\in X$.

\end{itemize}

That is, the following diagram is commutative:

\[
\xymatrix{
    X\times X \ar[r]^{\mu_X} \ar[d]_{f\times f} & \Sym^n(X) \ar[d]^{\Sym^n(f)} \\
    Y\times Y \ar[r]       & \Sym^n(Y) }
\]

A bijective homomorphism of $n$-valued groups is called an {\it $n$-isomorphism}.

\end{defi}

Thus, the class of $n$-valued groups forms a category $\nGrp$.

There is a construction of $n$-valued groups \cite{Buchstaber} in which, for a given ordinary ($1$-valued) group $G$ with multiplication $\mu_0$, identity $e_G$, and inverse $\inv_G(u) = u^{-1}$, and for a given finite subgroup $H$ of the automorphism group $\Aut(G)$, one considers the set of orbits $X := G/H$ of the group $G$ under the action of $H$, with projection $\pi: G\to X$, and defines an $n$-valued multiplication
$$\mu: X\times X\to \Sym^n(X),$$
by the formula
\begin{equation}\label{coset}
\mu(x, y) = [ \pi(\mu(u, h(v))) \ | \ h\in H ],
\end{equation}
where $u\in\pi^{-1}(x)$ and $v\in\pi^{-1}(y)$.

Let us note that non-isomorphic pairs $(G_1, H_1)$ and $(G_2, H_2)$ may give rise to isomorphic $n$-valued groups \cite[Proposition 3.2]{BuchRees}.

\begin{prop}[(\textnormal{Coset construction}, \cite{Buchstaber})]\label{coset_theorem}
The product $\mu$ from {\normalfont (\ref{coset})} defines an $n$-valued multiplication on the orbit space $X = G/H$ with identity $e_X = \pi(e_G)$ and inverse $\inv_X(x) = \pi(\inv_G(u))$, where $u\in\pi^{-1}(x)$.
\end{prop}

\begin{defi}\label{coset_defi}
For each group $G$ and each subgroup $H$ of order $n$ of the group $\Aut(G)$, we shall call the $n$-valued group on the orbit set $G/H$ from Proposition \ref{coset_theorem} a {\it coset group} or a {\it coset construction}.
\end{defi}

Let us note that the uniqueness of the inverse-element map does not follow from the axioms of an $n$-valued multiplication. However, in the case of the coset construction, the inverse is unique.

\begin{defi}
An {\it $n$-valued dynamics} $T$ on a space $X$ is a map $T: X\to \Sym^n(X)$.
\end{defi} 

One may think of $X$ as a certain state space. Then an $n$-valued dynamics $T$ determines the possible states $T(x) = [x_1, ..., x_n]$ at time $t+1$ as a function of the state of the element $x$ at time $t$.

\begin{defi}\label{group_action}
An {\it action of an $n$-valued group $G$ on a space} $X$ is a map
$$\circ: G\times X\to \Sym^n(X),\ g\circ x = [x_1, ..., x_n],$$
such that

\begin{itemize}

 \item for any elements $g_1$, $g_2\in G$ and any $x\in X$, the following $n^2$-multisets coincide:
 $$g_1\ast(g_2\circ x) = [g_1\circ x_1, ..., g_1\circ x_n]$$
 and
 $$(g_1\ast g_2)\circ x = [h_1\circ x, ..., h_n\circ x],$$
 where $g_2\circ x = [x_1, ..., x_n]$ and $g_1\ast g_2 = [h_1, ..., h_n]$,
 
 \item $e\circ x = [x, ..., x]$ for every $x\in X$. 
 
 \end{itemize}
 
\end{defi}

\begin{example}
Every $n$-valued group $G$ acts on itself by left shifts, just as in the case of single-valued groups. As was shown in the recent paper \cite{Posadskiy}, there exists a strongly reversible continuous dynamics on $\Cc$ which is not generated by a continuous action of any two-valued group.
\end{example}

In \cite{Buchstaber_Veselov}, an integrability condition for an $n$-valued dynamics on a space $X$ was formulated in terms of a representation of a certain $n$-valued group $G$ on $X$. In \cite{Yagodovskii}, a sufficient criterion for the integrability of a multivalued dynamics by means of a multivalued group was obtained.

\section
[Algebraic $\texorpdfstring{\boldsymbol{n}}{n}$-Valued Monoids and Groups]
{Algebraic $\boldsymbol{n}$-Valued Monoids and Groups}
\label{algebraic_nval_monoids_and_groups}

To formulate the results of the present paper, we recall and introduce several definitions and constructions.

\begin{defi}\label{def_1}
{An {\it algebraic $n$-valued monoid} is an algebraic variety $X$ with an associative $n$-valued multiplication defined by a rational morphism $X\times X\to \Sym^n(X)$ and with a neutral element (identity) $e\in X$, i.e. $x\ast e$ and $e\ast x$ exist and the equalities $x\ast e = e\ast x = [x, x, ..., x]$ hold for every $x\in X$. An {\it algebraic $n$-valued group} is an algebraic $n$-valued monoid on $X$ with a rational inversion morphism $\inv: X\to X$ such that, whenever $\inv(x)$ exists, the following conditions hold: $x\ast\inv(x)$ and $\inv(x)\ast x$ exist, and $e\in x\ast\inv(x)$, $e\in \inv(x)\ast x$. An algebraic $n$-valued monoid (or group) on $X$ is called {\it regular} if the $n$-valued multiplication $X\times X\to \Sym^n(X)$ is defined on the whole variety $X\times X$ (and the map $\inv$ is defined everywhere on $X$ in the case of a group).} 
\end{defi}  

\begin{example}\label{example_1}
On $\Cc\!P^1 = \Cc\cup\{\infty\}$ there is a structure of a $1$-valued commutative algebraic Hadamard monoid $M_{\mult}(\Cc\!P^1)$ with neutral element $1 = [1:1]$ and multiplication
$$(z_1:z_0)\cdot (w_1:w_0) = (z_1w_1:z_0w_0),$$
defined on $(\Sym^2\Cc\!P^1)\backslash\left\{[0, \infty]\right\}$. The element $\infty = (1: 0)$ is absorbing in this monoid, i.e. $z\ast\infty = \infty$ for every $z\in\Cc\!P^1\backslash\{0\}$. The elements of $\Cc\!P^1\backslash\{0, \infty\}$, and only they, have an inverse $\inv (z_1:z_0) = (z_0:z_1)$.
\end{example}

\begin{example}\label{example_cusp}
On $\Cc\!P^1 = \Cc\cup\{\infty\}$ there is also a structure $M_{\cusp}(\Cc\!P^1)$ of a $1$-valued algebraic commutative monoid with neutral element $0$ and multiplication
$$(z_1:z_0)\cdot(w_1:w_0) = (z_1w_0 + z_0w_1: z_0w_0),$$
defined on $(\Sym^2\Cc\!P^1)\backslash\left\{[\infty, \infty]\right\}$. The elements of $\Cc\!P^1\backslash\{\infty\}$, and only they, have an inverse $\inv (z_1:z_0) = (-z_1:z_0)$. 
\end{example}

\begin{defi}\label{isos_nMon}
Two algebraic (analytic, topological) $n$-valued monoids (groups) $X$ and $Y$ are called {\it isomorphic} if there exists an isomorphism (homeomorphism) $\phi: X\to Y$ inducing a commutative diagram
\begin{equation}\label{iso_square}
\xymatrix{X\times X\ar[rr]\ar[dd]_{\cong} & & \Sym^n(X)\ar[dd]_{\cong}\\ & & \\ Y\times Y\ar[rr] & & \Sym^n(Y)}
\end{equation} 
\end{defi}

\begin{example}
The algebraic $1$-valued monoids $M_{\mult}(\Cc\!P^1)$ and $M_{\cusp}(\Cc\!P^1)$ are not isomorphic, since $M_{\mult}(\Cc\!P^1)$ contains elements of finite order.
\end{example}  

\begin{example}\label{Z_plus_and_G_2}
Consider the discrete coset group from \cite[Section 4]{Buchstaber} on the set $\Z_{+}$ of nonnegative integers, with identity $0$ and multiplication
$$x_1\ast x_2 = \ [x_1 + x_2, |x_1 - x_2|].$$
{Let us compare it with the coset subgroup $\G_2(\mathrm{Sqr}(\Z))\subset \G_2(\Cc)$, where $\mathrm{Sqr}(\Z)$ denotes the set of squares of integers; it has identity $0$ and multiplication
$$y_1\ast y_2 = [(\sqrt{y_1} + \sqrt{y_2})^2, (\sqrt{y_1} - \sqrt{y_2})^2]$$
for all nonnegative integers $y_1$ and $y_2$.} The squaring map $x\mapsto x^2$ is a bijection between the corresponding orbit spaces and makes the square (\ref{iso_square}) commutative. Therefore these two-valued groups are isomorphic.
\end{example}

\begin{defi}\label{sym_n-algebraic_n-valued_definition_first}
A {\it symmetric n-algebraic n-valued monoid {\normalfont(}group{\normalfont)} on \(\Cc\)} is an algebraic $n$-valued monoid {\normalfont(}group{\normalfont)} $\G_{\Cc}(\,f(z; x,y))$ with a partially defined multiplication
$$x\ast y = [z\mid f(z; x, y) = 0],$$
where the polynomial $f(z;(-1)^nx,(-1)^ny)$ is symmetric and has degree at most $n$ in each variable.
\end{defi}

\begin{remark*}
In this definition, neither the map $\inv$ nor the neutral element is specified.
\end{remark*}

\begin{example}\label{universal_2_valued}
Let
\begin{equation}\label{B_a}
B_{\boldsymbol{a}}(z; x, y) = (x + y + z - a_2xyz)^2 - 4(1 + a_3xyz)(xy + yz + xz + a_1xyz)
\end{equation}
be the Buchstaber polynomial. Equivalently, in elementary symmetric functions:
$$B_{\boldsymbol{a}}(z; x, y) = e_1^2 - 4 e_2 - 4 a_1 e_3 - 2 a_2 e_1 e_3 - 
 4 a_3 e_2 e_3 + (a_2^2 - 4 a_1 a_3) e_3^2.$$
The polynomials $B_{\boldsymbol{a}}(z; x, y)$ define on $\Cc$ the structure of the universal algebraic two-valued group $\G_{\Cc}(B_{\boldsymbol{a}})$ for any $\boldsymbol{a}\in\Cc^3$, with multiplication
$$x\ast y = [z\mid B_{\boldsymbol{a}}(z; x, y) = 0],$$
neutral element $0$, and map $\inv(x) = x$ \cite[Theorem 6.3]{Buchstaber_Veselov19}, \cite[Theorem 1]{BK}.
\end{example}

\begin{defi}
Let $M$ be a single-valued monoid and let $H$ be a subgroup of order $n$ of the automorphism group $\Aut(M)$. We shall call the result of applying the construction from \cite[Theorem 1]{Buchstaber} to $M$ and $H$ a {\it coset} $n$-valued monoid, and denote it by $\M(H)$. 
\end{defi}

\begin{prop}
The definition of a coset monoid is correct.
\end{prop}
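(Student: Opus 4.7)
The plan is to mimic the group case of \cite[Theorem 1]{Buchstaber}, keeping careful track of where inverses in $M$ are used and verifying that the weaker monoid axioms suffice. Concretely, take the underlying set of $M_H$ to be the orbit space $M/H$, and define the $n$-valued multiplication by
\[
[x]\ast[y]=\bigl[\,[x\cdot h(y)]\,\bigr]_{h\in H},
\]
regarded as a multiset of $n$ elements of $M/H$ (where $n=|H|$). The neutral element is $[e]$, the class of the neutral element of $M$. Three things have to be checked: that this formula is independent of the chosen representatives $x,y$, that the operation is associative, and that $[e]$ behaves as required by Definition~\ref{def_1}.

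For well-definedness, if we replace $x$ by $h_1(x)$ and $y$ by $h_2(y)$ for $h_1,h_2\in H$, then using that $h_1\in \Aut(M)$ is a monoid homomorphism,
\[
h_1(x)\cdot h(h_2(y))=h_1\!\bigl(x\cdot h_1^{-1}hh_2(y)\bigr),
\]
so its $H$-orbit equals the orbit of $x\cdot h'(y)$ with $h'=h_1^{-1}hh_2$. As $h$ ranges over $H$, so does $h'$, and the resulting multiset of orbits coincides with the original. Here we only use that $H$ is a group acting by monoid automorphisms; we never invert elements of $M$, which is the essential point that distinguishes the monoid case from the group case.

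Associativity is then a one-line rearrangement: both $([x]\ast[y])\ast[z]$ and $[x]\ast([y]\ast[z])$ compute to the multiset
\[
\bigl[\,[x\cdot h(y)\cdot h'(z)]\,\bigr]_{h,h'\in H},
\]
after using $h(y\cdot h'(z))=h(y)\cdot hh'(z)$ and reindexing $hh'\mapsto h''$. For the neutral element, each $h\in H$ fixes $e$ since $h$ is a monoid automorphism, hence
\[
[x]\ast[e]=\bigl[\,[x\cdot h(e)]\,\bigr]_{h\in H}=\bigl[\,[x]\,\bigr]_{h\in H}=\bigl[[x],\dots,[x]\bigr],
\]
and symmetrically $[e]\ast[x]=\bigl[\,[h(x)]\,\bigr]_{h\in H}=\bigl[[x],\dots,[x]\bigr]$, because $h(x)\in [x]$ for every $h\in H$.

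I expect the only real subtlety to be bookkeeping: one must argue that the count of $n$ elements in the multiset $[x]\ast[y]$ is correct (orbits are counted with the multiplicity with which they arise, not as a set), and that the associativity reindexing genuinely preserves multiplicities, not just underlying sets. This is the place where the construction would fail if $H$ were merely a submonoid of $\Aut(M)$ rather than a subgroup, so the hypothesis that $H$ is a group is used exactly once, in the bijection $h\mapsto h_1^{-1}hh_2$ of $H$ with itself that ensures independence of representatives.
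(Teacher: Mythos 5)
Your proof is correct and follows essentially the same route as the paper's: define the product of orbits as the multiset $[\,\pi(x\cdot h(y))\,]_{h\in H}$, use the reindexing $h\mapsto h_1^{-1}hh_2$ (relying only on $H$ being a group of monoid automorphisms, never on inverses in $M$) for independence of representatives, and verify associativity and the neutral element by the same cocycle-style rearrangement $h(y\cdot h'(z))=h(y)\cdot hh'(z)$. Your explicit remark on where the subgroup hypothesis enters is a small clarity bonus over the paper's terser presentation, but the argument is the same.
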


\begin{proof}
Let $\pi: M\to X = M/H$ be the projection onto the orbit space. Let $\pi(m_1) = x_1$ and $\pi(m_2) = x_2$. Then the multiplication is given as follows:
$$\begin{aligned}
x_1\ast x_2 &= [\pi(\phi(m_1)\cdot \psi(m_2))\mid \phi, \psi\in H]\\
&= [\pi\phi(m_1\cdot \phi^{-1}\psi(m_2))\mid \phi, \psi\in H]\\
&= [\pi(m_1\cdot \zeta(m_2))\mid \zeta\in H].
\end{aligned}$$
For associativity, on the one hand we have
$$\begin{aligned}
(x_1\ast x_2)\ast x_3 &= [\pi(m_1\cdot \phi(m_2))\ast x_3 \mid \phi\in H]\\
&= [\pi(m_1\cdot \phi(m_2)\cdot \psi(m_3))\mid \phi, \psi \in H].
\end{aligned}$$
On the other hand,
$$\begin{aligned}
x_1\ast(x_2\ast x_3) &= [x_1\ast\pi(m_2\cdot \phi(m_3))\mid \phi\in H]\\
&= [\pi(m_1\cdot\psi(m_2\cdot\phi(m_3)))\mid \phi, \psi\in H]\\
&= [\pi(m_1\cdot \psi(m_2)\cdot \psi(\phi(m_3)))\mid \phi, \psi\in H].
\end{aligned}$$
The identity is the class $eH$:
$$x\ast eH = eH\ast x = [m\cdot \phi(e)\mid \phi\in H] = [m, ..., m].$$
\end{proof}

\begin{example}[(The two-valued Chebyshev monoid)]\label{Chebyshev}
Consider on $M_{\mult}(\Cc\!P^1)$ (Example \ref{example_1}) the involution $\tau: z\mapsto 1/z$ for each $z\in \Cc\!P^1$. The points of the orbit space with respect to the involution $\tau$ are represented by the fibers of the branched two-sheeted covering
\begin{equation}\label{cheb_projection}
\begin{aligned}
\pi: \Cc\!P^1&\to \Cc\!P^1\\
z&\mapsto\frac{1}{2}\left(z + \frac{1}{z}\right)
\end{aligned}
\end{equation}
with branch points $\pm 1$. The corresponding coset monoid $\M_{\mult}(\Cc\!P^1) := M_{\mult}(\Cc\!P^1)_{\langle \tau \rangle}$ has identity $1$ and multiplication
\begin{equation}\label{cosine_law}
x\ast y = \left[xy \pm\sqrt{(x^2 - 1)(y^2 - 1)}\right],
\end{equation}
defined on $(\Sym^2\Cc\!P^1)\backslash\{[\infty, \infty]\}$. The described structure of a two-valued algebraic monoid does not extend to a structure of a two-valued algebraic group. The case of the multiplicative torus and the automorphism $z\mapsto 1/z$ was considered in \cite[Section 7, Example 3]{Buchstaber}. The addition law is given by the roots in $z$ of the polynomial
$$P(z; x, y) = z^2 - 2xy z + x^2 + y^2 -1.$$
In homogeneous coordinates, the multiplication $\Cc\!P^1\times \Cc\!P^1\to\Sym^2(\Cc\!P^1)\cong \Cc\!P^2$ is written as follows:
$$(x_1:x_0)\ast(y_1:y_0) = (x_1^2y_0^2+y_1^2x_0^2-x_0^2y_0^2: -2x_1y_1x_0y_0: x_0^2y_0^2).$$
For $x = \cos\alpha$, $y = \cos\beta$, the multiplication (\ref{cosine_law}) takes the form of the addition formulas for the cosine. Consider the two-valued submonoid $\Tt = \Tt(\Cc)$ of the monoid $\M_{\mult}(\Cc\!P^1)$ generated by taking nonnegative integer powers of the element $\cos\alpha$. Let $T_s = T_s(\cos\alpha) = \cos s\alpha$ be the classical Chebyshev polynomials of the first kind ($s\geqslant 0$). Then
$$T_s\ast T_k = [T_{s+k}, T_{|s - k|}].$$
{The algebraic two-valued monoid $\Tt$ is endowed with the structure of a coset algebraic two-valued involutive group, i.e. the inverse map is uniquely defined and has the form $\inv(x) = x$. The group $\Tt$ is isomorphic to $\G_2(\Cc)$ (see Example \ref{Z_plus_and_G_2}). For this reason we shall call $\M_{\mult}(\Cc\!P^1)$ the {\it two-valued Chebyshev monoid}.}    
\end{example}

\begin{example}\label{G_n}
Let
\begin{equation}\label{p_n}
p_n(z; x, y) = \prod\limits_{r, s = 1}^{n}(\sqrt[n]{z} + \varepsilon^r\sqrt[n]{x} + \varepsilon^s\sqrt[n]{y})
\end{equation}
where $\eps$ is some primitive $n$-th root of unity, and $\sqrt[n]{-}$ denotes some complex branch of the root. Then the polynomial
$$p_n(z; (-1)^nx, (-1)^ny)$$
defines the commutative algebraic $n$-valued group $\G_n(\Cc)$ on $\Cc$ from Example \ref{G_n_defi}. The group $\G_n(\Cc)$ is obtained by the coset construction $\G_n(\Cc) = \Cc_{\langle \phi \rangle}$ from the additive group $\Cc$ of complex numbers and its automorphism $\phi: z\mapsto \epsilon z$ of order $n$. 

We present the explicit forms of the polynomials $p_n(z; x, y)$ for $n\leqslant 7$ in the basis of elementary symmetric functions $\sigma_j$: 

\begin{equation}\label{p_1-p_7}\begin{aligned} 
p_1 &=\sigma_1,\\
p_2 &=\sigma_1^2 -2^2\sigma_2,\\
p_3 &=\sigma_1^3 -3^3\sigma_3,\\
p_4 &=\sigma_1^4 - 2^3\sigma_1^2\sigma_2 + 2^4\sigma_2^2 - 2^7\sigma_1\sigma_3,\\
p_5 & = \sigma_1^5 - 5^4\sigma_1^2\sigma_3 + 5^5\sigma_2\sigma_3,\\
p_6 &=\sigma_1^6 -2^2 \cdot 3\sigma_1^4\sigma_2 -2\cdot3^4 \cdot17\sigma_1^3\sigma_3 +2^4\cdot 3\sigma_1^2\sigma_2^2\\ &-2^3 \cdot 3^4 \cdot 19\sigma_1\sigma_2\sigma_3 -2^6\sigma_2^3 +3^3 \cdot19^3 \sigma_3^2,\\
p_7 &= \sigma_1^7 - 7^4\sigma_3(5\sigma_1^4 - 2 \cdot 7^2\sigma_1^2\sigma_2 - 7^4\sigma_1\sigma_3 + 7^3\sigma_2^2).
\end{aligned}\end{equation}
 
\end{example}

We introduce definitions of isomorphisms in the category of coset algebraic (topological) $n$-valued monoids.

\begin{defi}\label{isomorphisms}
Let $M$ and $N$ be $1$-valued algebraic (topological) monoids, and let $A\subset\Aut(M)$ and $B\subset\Aut(N)$ be finite subgroups of order $n$. We say that two coset monoids $M_{A}$ and $N_{B}$ are {\it coset-isomorphic} if there exist isomorphisms $\widetilde{\phi}: M\to N$ and $\phi: M/A\to N/B$ making the following diagram commutative:
\begin{equation}\label{cubic_diagram}
\xymatrix@C=3.5em@R=3.5em{
  & M\times M \ar[rr] \ar[dd] \ar[dl]^{\cong}_{\widetilde{\phi}\times\widetilde{\phi}} && \Sym^n(M) \ar[dd] \ar[dl]^{\cong} \\
  N\times N\ar[rr] \ar[dd] && \Sym^n(N) \ar[dd] \\
  & M/A\times M/A \ar[rr] \ar[dl]^{\cong}_{\phi\times\phi} && \Sym^n(M/A) \ar[dl]^{\cong} \\
  N/B\times N/B \ar[rr] && \Sym^n(N/B)
}
\end{equation}
Moreover, all arrows connecting the front and back faces of the parallelepiped must be isomorphisms. 
\end{defi}

\begin{example}
{As noted in \cite[Proposition 3.2]{BuchRees}, there is an example of two isomorphic two-valued coset groups ${G_{1}}_{H_1}$ and ${G_{2}}_{H_2}$ which are not coset-isomorphic. Namely, consider the infinite dihedral group $G_1 = \langle a,b\mid a^2 = b^2 = e \rangle$ and the group $H_1 = \langle \sigma \rangle\subset\Aut(G_1)$ generated by the involution $\sigma: a\mapsto b$. Let $G_2 = \Z$, and let $H_2 = \langle \iota \rangle\subset \Aut(G_2)$ be the group generated by the involution $\iota: x\mapsto -x$. Then the two-valued groups ${G_1}_{H_1}$ and ${G_2}_{H_2}$ are isomorphic to the two-valued group $\G_2(\mathrm{Sqr}(\Z))$ from Example \ref{Z_plus_and_G_2}, but ${G_1}_{H_1}$ and ${G_2}_{H_2}$ are not coset-isomorphic, since $G_1$ is noncommutative whereas $G_2$ is commutative.}
\end{example}

We also recall the following.
 
\begin{defi}\label{involutive}
An $n$-valued group $G$ is called {\it involutive} if
$$\inv(x) = x$$
for every $x\in G$ {and the map $\inv$ is uniquely defined}.
\end{defi}

\begin{example}
The group $\G_{\Cc}(B_{\boldsymbol{a}})$ from Example \ref{universal_2_valued} is involutive ({the uniqueness of the map $\inv$ follows from the fact that the group $\G_{\Cc}(B_{\boldsymbol{a}})$ is coset; see Section \ref{2_valued_structures_on_CP_1}}).
 \end{example}
 
\begin{defi}\label{iterating_element_definition}
An element $w$ of an $n$-valued monoid $M$ will be called {\it iterating} if, for every $m\in M$, each of the multisets $w\ast m$ and $m\ast w$ contains an element $u$ (depending, in general, on $m$) with multiplicity at least $2$. For $n = 2$, an iterating element $w$ will be called a {\it doubling} element.
\end{defi}

Recall (see \cite[Lemma 1]{Buchstaber}) that the $n$-diagonal construction, or briefly the $n$-diagonal, of a $1$-valued monoid (group) $G$ is the $n$-valued monoid (group) $\diag(G)$ in which $g_1\ast g_2 = [g_1g_2, ..., g_1g_2]$ for any $g_1, g_2 \in G$. Similarly, the $n$-diagonal $\diag_n(G)$ is defined for any $m$-valued monoid (group) $G$; it is an $mn$-valued monoid (group). 

\begin{prop}\label{doubling_monoid}
The set $\W$ of doubling elements of a two-valued monoid $($respectively, group$)$ $\M$ forms a diagonal two-valued submonoid $($respectively, subgroup$)$.
\end{prop}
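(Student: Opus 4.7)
The plan is to first reduce the doubling condition to a cleaner form: since $\M$ is $2$-valued, the multiset $w\ast m$ has exactly two elements counted with multiplicity, so containing a point of multiplicity at least $2$ is the same as $w\ast m=[u,u]$ for some $u\in\M$. Thus $\W$ is precisely the set of elements all of whose left and right products collapse to the diagonal of $\Sym^2(\M)$.

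Once the definition is in this form, I would verify the submonoid axioms in order. The neutral element $e$ lies in $\W$ because the neutral axiom $e\ast m=m\ast e=[m,m]$ is literally the diagonal condition. For closure under multiplication, given $w_1,w_2\in\W$, applying the doubling property of $w_1$ at $m=w_2$ immediately produces $w_1\ast w_2=[w,w]$ for some $w\in\M$. The real content of the proposition is proving that this $w$ is itself doubling.

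To see this, I would plug into the associativity identity $(w_1\ast w_2)\ast m=w_1\ast(w_2\ast m)$ in $\Sym^4(\M)$ for an arbitrary $m\in\M$. Writing $w\ast m=[p,q]$, the left-hand side expands as $[w,w]\ast m=[p,q,p,q]$. On the right, the doubling of $w_2$ produces $w_2\ast m=[u,u]$, and then the doubling of $w_1$ produces $w_1\ast u=[v,v]$, so the right-hand side equals $[v,v,v,v]$. Equality of multisets forces $p=q=v$, hence $w\ast m=[v,v]$. The mirror argument using $m\ast(w_1\ast w_2)=(m\ast w_1)\ast w_2$ yields the analogous statement for right products, so $w\in\W$. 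Since every product in $\W$ is of diagonal form, the induced single-valued operation $(w_1,w_2)\mapsto w$ turns $\W$ into a $1$-valued submonoid whose $2$-valued structure coincides with its $2$-diagonal.

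In the involutive-group case closure under inversion comes for free, since $\inv(x)=x$ for all $x\in\M$, so the same argument delivers a diagonal $2$-valued subgroup. The only step requiring genuine care is the multiset bookkeeping in $\Sym^4(\M)$: one must be confident that for $a\in\M$ and $[b_1,b_2]\in\Sym^2(\M)$ the product $a\ast[b_1,b_2]$ is really the multiset union $(a\ast b_1)\cup(a\ast b_2)$ of size four, so that the cancellation $[p,q,p,q]=[v,v,v,v]\Rightarrow p=q=v$ is legitimate. I expect this to be the main (and essentially only) potential pitfall.
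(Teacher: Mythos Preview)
Your proposal is correct and follows essentially the same route as the paper: both arguments expand the associativity identity in $\Sym^4(\M)$, using that each factor $w_i$ collapses its side to a diagonal, to conclude that the product $w_1\ast w_2=[w,w]$ has $w$ again doubling. Your write-up is in fact slightly more careful than the paper's, since you explicitly verify both the left and the right doubling conditions for $w$ (the paper checks only $m\ast w_3$ and leaves the mirror case implicit), and you flag the multiset bookkeeping $[p,q,p,q]=[v,v,v,v]\Rightarrow p=q=v$ as the one place requiring attention.
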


\begin{proof}
Let $\mathbb{W}$ denote the subset of $\M$ consisting of doubling elements. Let $w_1, w_2\in\mathbb{W}$ and let $w_1\ast w_2 = [w_3, w_3]$ for some $w_3\in\M$. Let $m\in\M$ be an arbitrary element. We have
$$\begin{aligned}
[m\ast w_3, m\ast w_3] &= m\ast (w_1\ast w_2)\\
&= (m\ast w_1)\ast w_2.
\end{aligned}$$
Hence $w_3\in \mathbb{W}$, since the multiset $(m\ast w_1)\ast w_2$ is some quadruple point. The case of $w_3\ast m$ is considered similarly. Consider on the set $\mathbb{W}$ the operation
\begin{equation}
\begin{aligned}\label{operation_doubling_submonoids}
\mathbb{W}\times \mathbb{W}&\to \mathbb{W}\\
w_1\cdot w_2 &= w_3.
\end{aligned}
\end{equation}
It is easy to see that the two-valued submonoid $\mathbb{W}\subset\M$ is the two-diagonal of a single-valued monoid $\W$ with operation (\ref{operation_doubling_submonoids}).

{Now let $\M$ be a two-valued group and let $x\in \W$ be a doubling element. We show that $\inv(x)$ is also doubling. For every $y\in\M$:
\begin{equation}\label{invx_is_iterating_element}
x\ast(\inv(x)\ast y) = (x\ast \inv(x))\ast y = [e,e]\ast y =  [y,y,y,y].
\end{equation}
Let $\inv(x)\ast y = [a, b]$ for some $a, b\in \M$. Then from \eqref{invx_is_iterating_element} it follows that
\begin{equation}\label{ax_equals_xb}
x\ast a = x\ast b = [y,y].
\end{equation}
Multiplying equality \eqref{ax_equals_xb} on the left by $\inv(x)$, we obtain
\begin{equation}\label{invxxa_euqals_invxxb}
(\inv(x)\ast x)\ast a = (\inv(x)\ast x)\ast b.
\end{equation}
Since $\inv(x)\ast x = [e,e]$, it follows from \eqref{invxxa_euqals_invxxb} that $a = b$. Hence $\inv(x)\in\W$.}
      
\end{proof}

\begin{defi}
We shall call the single-valued monoid (group) from the proof of Proposition \ref{doubling_monoid} the monoid (group) of {\it doubling points}.
\end{defi}

It is clear that under isomorphisms of two-valued monoids (groups), the single-valued monoids (groups) of doubling points are preserved.

\begin{example}\label{doubling_elements_Chebyshev}
The coset two-valued algebraic Chebyshev monoid from Example \ref{Chebyshev} has exactly two doubling elements, at the branch points ($1$ and $-1$) of the branched two-sheeted covering \eqref{cheb_projection}. Indeed, let $y$ be an iterating element. {We may assume that $y\neq \infty$, since $\infty\ast\infty$ is not defined in $\M_{\mult}(\CP^1)$ and, consequently, $\infty$ cannot be a doubling element.} Then the polynomial
$$P_{\mult}(z; x, y) = z^2 - 2xy z + x^2 + y^2 -1$$
has a multiple root in $z$ for every $x$, i.e.
\begin{equation}\label{P_mult}
\disc_z(P_{\mult}(z; x, y))  = 4 \left(x^2-1\right) \left(y^2-1\right) = 0.
\end{equation}
Hence $y \in \{\pm 1\}$. The resulting single-valued group of doubling points is isomorphic to $\Z/2$.
\end{example}

\section
[Cubics and $\texorpdfstring{\boldsymbol{n}}{n}$-Valued Coset Addition Laws]
{Cubics and $\boldsymbol{n}$-Valued Coset Addition Laws}
\label{Cubics_and_coset_nval_laws}

In this section we describe the polynomials defining all possible coset addition laws, up to isomorphism, in algebraic $n$-valued monoids and groups on $\Cc\!P^1$ modeled by means of cubics. We introduce and recall several definitions and constructions.

Let
$$\delta_{\boldsymbol{a}} = \disc_t(t^3 + a_1t^2 + a_2t + a_3)$$
be the discriminant with respect to the variable $t$. Then
\begin{equation}\label{Delta_a}
\delta_{\boldsymbol{a}} = -4 a_3 a_1^3+a_2^2 a_1^2+18 a_2 a_3 a_1-4 a_2^3-27 a_3^2.
\end{equation}
Let $\E$ be an irreducible cubic over the field $\Cc$. As is well known (see, for example, \cite[Exercise 5-24]{Fulton}), $\E$ is isomorphic to a cubic in $\Cc\!P^2$ with coordinates $(\zeta:\eta:\xi)$, given in the chart $\{\xi = 1\}$ by the equation
\begin{equation}\label{ellitpic_equation}
\E = \{\eta^2 = \zeta^3 + a_1\zeta^2 + a_2\zeta +a_3\}.
\end{equation} 

An elliptic curve, as an abelian variety over the field $\Cc$, admits automorphisms only of orders $2$, $3$, $4$, and $6$ \cite[Corollary 4.7]{Hartshorne}. We shall successively consider the cases of a nonsingular and a singular irreducible cubic $\E$, and the arising structures of two-, three-, four-, and six-valued groups and monoids.

\subsection{The Case of a Nonsingular Cubic}\label{The_case_of_non-singular_cubic}

Let the point $\boldsymbol{a} = (a_1,a_2,a_3)$ not lie in the singular locus $\{\delta_{\boldsymbol{a}} = 0\}$. Let the complex parameters $\alpha, g_2$, and $g_3$ be such that, in terms of them, the curve $\E$ can be rewritten in the form
$$\eta^2 = (\zeta + \alpha)^3 -\frac{g_2}{4}(\zeta + \alpha) - \frac{g_3}{4},$$
\begin{equation}\label{a1a2a3_system}
\begin{cases}
a_{1}= & 3\alpha,\\
a_{2}= & 3\alpha^{2}-g_{2}/4,\\
a_{3}= & \alpha^{3}-g_{2}\alpha/4-g_{3}/4.
\end{cases}
\end{equation}
Recall that the addition law
$$(\zeta_1, \eta_1)\oplus (\zeta_2, \eta_2) = (\zeta_3, \eta_3)$$
on it is given by the equalities (for distinct points of the curve $\E$):
\begin{equation}\label{sum_formula_x3}
\left\{\begin{aligned} 
\zeta_3 &= -\zeta_1 - \zeta_2 - 3\alpha + \left(\frac{\eta_1 - \eta_2}{\zeta_1 - \zeta_2}\right)^2,\\ 
\eta_3 &= (\zeta_1 - \zeta_3)\cfrac{\eta_1 - \eta_2}{\zeta_1 - \zeta_2} - \eta_1.
\end{aligned}\right.
\end{equation}
When the two summands coincide, equality (\ref{sum_formula_x3}) should be understood in the limiting sense as $\zeta_2\to \zeta_1$.

\subsubsection{Two-Valued Structures on $\Cc\!P^1$}\label{2_valued_structures_on_CP_1} 

There is a branched double covering
\begin{equation}\label{branched_covering}
\begin{aligned}
\pi_2:\E&\to \Cc\!P^1\\
(\zeta,\eta)&\mapsto \zeta,\\
\infty&\mapsto\infty,
\end{aligned}
\end{equation}
{with the following four branch points, each of index two: $\infty$ and $(\zeta_k, 0)$, $k=1,2,3$, where $\zeta_k$ are the roots of the polynomial $\zeta^3 + a_1\zeta^2 + a_2\zeta + a_3$.} The fibers of the map $\pi_2$ are in bijective correspondence with the points of the orbit space $\E/\langle \sigma \rangle$ with respect to the involution
\begin{equation*}
\begin{aligned}\label{involution_sigma}
\sigma:(\zeta,\eta)&\mapsto (\zeta,-\eta),\\
\infty&\mapsto \infty.
\end{aligned}
\end{equation*}
Applying the coset construction \cite[Theorem 1]{Buchstaber} to the involution $\sigma$ on the group of points of the elliptic curve, we obtain a structure $\E_{\langle\sigma \rangle}$ of a coset algebraic two-valued group on $\Cc\!P^1$ with neutral element at the point $\infty$:
\begin{equation}\label{x1_x2_2-valued}
\zeta_1 \ast \zeta_2 = \left [-\zeta_1 - \zeta_2 - 3\alpha + \left(\frac{\eta_1 \pm \eta_2}{\zeta_1 - \zeta_2}\right)^2\right ].
\end{equation}

\begin{prop}\label{lemma_1}
The values of expression {\normalfont{(\ref{x1_x2_2-valued})}} are the roots of the following quadratic polynomial $D(z; \zeta_1, \zeta_2)$ in the variable $z$:
\begin{equation}\label{D_polynomial}
D(z; \zeta_1, \zeta_2) = \Theta_0(\zeta_1, \zeta_2)z^2 + \Theta_1(\zeta_1, \zeta_2)z + \Theta_2(\zeta_1, \zeta_2),
\end{equation}
where
$$\Theta_0=16(\zeta_1-\zeta_2)^2,$$
$$\Theta_1=8\bigl(2g_3+g_2(\zeta_1+\zeta_2+2\alpha)-4\bigl(\zeta_1\zeta_2(\zeta_1+\zeta_2)+6\zeta_1\zeta_2\alpha+3(\zeta_1+\zeta_2)\alpha^2+2\alpha^3\bigr)\bigr),$$
$$
\begin{aligned}
\Theta_2&=(g_2+4\zeta_1\zeta_2)^2+16g_2(\zeta_1+\zeta_2)\alpha
+24\bigl(g_2-4\zeta_1\zeta_2\bigr)\alpha^2\\
&\quad-64(\zeta_1+\zeta_2)\alpha^3-48\alpha^4
+16g_3(\zeta_1+\zeta_2+3\alpha).
\end{aligned}
$$

\end{prop}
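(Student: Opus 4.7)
The plan is to view $z_+$ and $z_-$ from \eqref{x1_x2_2-valued} as the two roots of a quadratic in $z$ and to route the computation through the single auxiliary quantity $W := (y_1 \pm y_2)^2 = y_1^2 + y_2^2 \pm 2 y_1 y_2$. From $(W - y_1^2 - y_2^2)^2 = 4 y_1^2 y_2^2$ one reads off that $W$ satisfies
$$W^2 - 2(y_1^2 + y_2^2)\, W + (y_1^2 - y_2^2)^2 = 0,$$
and from \eqref{x1_x2_2-valued} one has $W = (z + s)(x_1 - x_2)^2$ with $s := x_1 + x_2 + 3\alpha$. Substituting this into the relation for $W$ and dividing by $(x_1 - x_2)^2$ yields the quadratic
$$(x_1-x_2)^2 (z+s)^2 - 2(y_1^2+y_2^2)(z+s) + \frac{(y_1^2-y_2^2)^2}{(x_1-x_2)^2} = 0$$
in $z$ whose roots, by construction, are precisely $z_+$ and $z_-$.

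The observation making the coefficients polynomial is the divisibility $y_1^2 - y_2^2 = (x_1-x_2)\, Q(x_1,x_2)$, immediate from $y_i^2 = (x_i+\alpha)^3 - \tfrac{g_2}{4}(x_i+\alpha) - \tfrac{g_3}{4}$, with
$$Q(x_1, x_2) = (x_1+\alpha)^2 + (x_1+\alpha)(x_2+\alpha) + (x_2+\alpha)^2 - \tfrac{g_2}{4}.$$
Thus $(y_1^2-y_2^2)^2/(x_1-x_2)^2 = Q^2$ is a polynomial, and multiplying through by $16$ (to clear the denominators introduced by $g_2/4$ and $g_3/4$ and to match the integer normalization in the statement) gives
$$D(z; x_1, x_2) = 16(x_1-x_2)^2 (z+s)^2 - 32(y_1^2+y_2^2)(z+s) + 16\, Q^2,$$
a polynomial in $z, x_1, x_2, \alpha, g_2, g_3$ with the prescribed roots.

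It then remains to expand $D$ in powers of $z$ and identify the coefficients with $\Theta_0, \Theta_1, \Theta_2$. The $z^2$-coefficient is $16(x_1-x_2)^2 = \Theta_0$ by inspection. The $z^1$-coefficient is $32[(x_1-x_2)^2 s - (y_1^2 + y_2^2)]$; substituting the curve equation for $y_i^2$, expanding $(x_1+\alpha)^3 + (x_2+\alpha)^3$ via the identity $a^3+b^3=(a+b)((a+b)^2-3ab)$, and regrouping collects an overall factor of $4$ and produces the stated $\Theta_1$. The constant term $16(x_1-x_2)^2 s^2 - 32 s(y_1^2+y_2^2) + 16 Q^2$ is handled analogously: the block $(g_2 + 4 x_1 x_2)^2$ of $\Theta_2$ emerges from $16 Q^2$ after extracting the leading $g_2$-contribution of $Q$, while the remaining mixed terms of the form $g_2(x_1+x_2)\alpha$, $\alpha^2, \alpha^3, \alpha^4$ and $g_3(x_1+x_2+3\alpha)$ arise from the cross-terms involving $s$ and $(y_1^2+y_2^2)$. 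The main obstacle is purely computational: verifying the explicit $\Theta_1$ and $\Theta_2$ is a lengthy but mechanical polynomial expansion in $x_1, x_2, \alpha, g_2, g_3$. Conceptually, the only nontrivial step is the divisibility $(x_1-x_2) \mid (y_1^2-y_2^2)$, without which $D$ would be merely a rational function of $x_1, x_2$ rather than a polynomial.
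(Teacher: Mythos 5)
Your argument is correct and is essentially the paper's approach made explicit: the paper simply invokes "direct computation via Vieta's formulas in a computer algebra system," and your resolvent for $W=(y_1\pm y_2)^2$ together with the substitution $W=(z+s)(x_1-x_2)^2$ is exactly that symmetrization over the $\pm$ sign, carried out by hand. The one genuinely useful point you add is the divisibility $(x_1-x_2)\mid(y_1^2-y_2^2)$ coming from the curve equation, which is what makes the coefficients polynomial; I have checked that your $D$ expands to the stated $\Theta_0,\Theta_1,\Theta_2$.
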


\begin{proof}
This is obtained by a direct computation using Vieta's formulas in any computer algebra system, for example, {\tt Wolfram Mathematica}.
\end{proof}

{In \cite{Kontsevich_type_polynomials}, the generalized Kontsevich polynomial
$$D_{\boldsymbol{a}}(z; x, y) = \disc_t\!\bigl(t^3 + a_1 t^2 + a_2 t + a_3 - (t - x)(t - y)(t - z)\bigr),$$
where $\disc_t$ denotes the discriminant with respect to the variable $t$, was introduced, and it was shown that $D_{\boldsymbol{a}}(z; x, y)  = D(-z; -x, -y)$, where $D(z; x, y)$ is the polynomial from Proposition \ref{lemma_1}.}

As we have already noted in the Introduction, part {\bf (i)} of the following theorem was obtained in \cite[Theorem 4.7]{Kontsevich_type_polynomials}. We give a new proof of it using Proposition \ref{lemma_1}, which in turn was obtained by us by methods of computer algebra, without using the theory of elliptic functions. Part {\bf (ii)} is new.

\begin{theorem}\label{2-valued_group}
\leavevmode
\begin{enumerate}[\bf (i)]
\item For $\delta_{\boldsymbol{a}}\neq 0$, the algebraic two-valued group $\G_{\Cc\!P^1}(D_{\boldsymbol{a}})\cong\G_{\Cc\!P^1}(B_{\boldsymbol{a}})$ with identity $\infty$ is an involutive regular coset two-valued group $\E_{\langle\sigma\rangle}$ for the group of points of the elliptic curve
\begin{equation}\label{elliptic_curve_equation_E}
\E = \{\eta^2 = \zeta^3 + a_1\zeta^2 + a_2\zeta + a_3\}
\end{equation}
with respect to the involution $\sigma: (\zeta, \eta)\mapsto (\zeta, -\eta)$. In homogeneous coordinates, the group $\G_{\Cc\!P^1}(B_{\boldsymbol{a}})$ on $\Cc\!P^1$ has identity $(0:1)$ and multiplication
$$\begin{aligned}
\mu_{\boldsymbol{a}}: \Cc\!P^1\times\Cc\!P^1&\to \Cc\!P^2\\
(x_1:x_0)\ast(y_1:y_0) &= (u_2 : u_1 : u_0)
\end{aligned}$$
\begin{align}\label{multiplication_mu}
u_2 &= \left(x_1 y_0-x_0 y_1\right)^2,\nonumber\\
-\frac{1}{2}u_1 &= x_1 x_0 \left(2 a_1 y_1 y_0+a_2 y_1^2+y_0^2\right)+x_1^2 y_1 \left(a_2 y_0+2 a_3 y_1\right)+x_0^2 y_0 y_1,\nonumber\\
u_0 &= x_1^2 y_1 \left(a_2^2 y_1-4 a_3 \left(a_1 y_1+y_0\right)\right)-2 x_0 x_1 y_1 \left(a_2 y_0+2 a_3 y_1\right)+x_0^2 y_0^2.\nonumber
\end{align}

\item {The coset-isomorphism class} of the coset algebraic two-valued group $\G_{\Cc\!P^1}(B_{\boldsymbol{a}})$ is completely characterized by the $j$-invariant of the elliptic curve {\normalfont \eqref{elliptic_curve_equation_E}},
$$j(\boldsymbol{a}) = 6912\,\frac{(3 a_2 - a_1^2)^3}{\,4(3 a_2 - a_1^2)^3 + (27 a_3 - 9 a_1 a_2 + 2 a_1^3)^2\,}.$$.
\end{enumerate}
\end{theorem}

\begin{proof}
\begin{enumerate}[\bf (i)]
\item Express the variables $\alpha$, $g_2$, and $g_3$ from system (\ref{a1a2a3_system}). Substituting them into the formulas from Proposition \ref{lemma_1}, and taking into account the isomorphism
$$\begin{aligned}
\phi: \Sym^2(\Cc\!P^1) &\to \Cc\!P^2\\
[(u_1:u_0), (v_1:v_0)] &\mapsto (u_1v_1:u_1v_0+u_0v_1:u_0v_0),
\end{aligned}$$
we obtain an expression in homogeneous coordinates for the law $\nu_{\boldsymbol{a}}$ defined by the Kontsevich polynomial $D_{\boldsymbol{a}}(-z; -x, -y)$. From the equality
$$B_{\boldsymbol{a}}(z; x, y) = (xyz)^2D_{\boldsymbol{a}}(-1/z; -1/x, -1/y)$$
it follows that, under the Möbius transformation
$$x\mapsto 1/x,\ y\mapsto 1/y,\ z\mapsto 1/z,$$
one obtains the addition formulas $\mu_{\boldsymbol{a}}: \Cc\!P^1\times\Cc\!P^1\to \Cc\!P^2$ in homogeneous coordinates with identity $0$.

Let us show that $\inv(\infty) = \infty$ in $\G_{\Cc\!P^1}(B_{\boldsymbol a})$ for $|a_2|^2 + |a_3|^2\neq 0$. We have in $\G_{\Cc\!P^1}(B_{\boldsymbol a})$:
$$\begin{aligned}
(1:0)\ast(y_1:y_0) = (y_0: -2 (2 a_3 y_1^2+a_2 y_0 y_1):a_2^2 y_1^2-4 a_1 a_3 y_1^2-4 a_3 y_0 y_1).
\end{aligned}$$
For the product $(x_1:x_0)\ast(y_1:y_0) = \phi^{-1}(u_2:u_1:u_0)$ to contain the point $(0:1)$, it is necessary and sufficient that $u_2 = 0$. Hence $y_0 = 0$. Therefore
$$(1:0)\ast(1:0) = (0:-4a_3:a_2^2-4 a_1 a_3).$$
Consequently, $\inv(\infty)$ exists and is equal to $\infty$ if and only if $|a_2|^2 + |a_3|^2\neq 0$.

\item An isomorphism of two-valued groups of the form $\G_{\Cc\!P^1}(D_{\boldsymbol{a}})$ consists of an automorphism of $\Cc\!P^1$ and an isomorphism $\psi$ of abelian varieties $\E_1\to\E_2$ (see Definition \ref{isomorphisms}). As is well known \cite[Lemma 4.9]{Hartshorne}, every morphism of elliptic curves preserving the marked points, namely the neutral elements, is a group homomorphism. The assertion now follows from the fact that the isomorphism class of an elliptic curve is completely characterized by its $j$-invariant \cite[Theorem 4.1]{Hartshorne}.   
\end{enumerate}
\end{proof}

\begin{theorem}\label{doubling_group_for_elliptic_curves}
Let $\E = \{\eta^2 = f(\zeta)\}$ be an elliptic curve, where
$$f(\zeta) = \zeta^3 + a_1\zeta^2 + a_2\zeta + a_3.$$
Then the following assertions hold:
 \begin{enumerate}[\bf (i)]
\item {The doubling elements of the two-valued group $\G_{\Cc\!P^1}(B_{\boldsymbol{a}}(z;$ $ x, y))$ are the point $0$ and the points of the form $1/w$, where $w$ ranges over the set of roots of the polynomial $f(\zeta)$.}   

\item The single-valued group of doubling points of the two-valued group $\G_{\Cc\!P^1}(B_{\boldsymbol{a}})$ is isomorphic to the Klein four-group $\Z/2\times \Z/2$.
 \end{enumerate}
\end{theorem}

\begin{proof}
\begin{enumerate}[\bf (i)]
\item To find all doubling elements for the two-valued group $\G_{\Cc\!P^1}(B_{\boldsymbol{a}}(z;x,y))$, we argue as in Example \ref{Chebyshev} and obtain
\begin{equation}\label{B_discriminant_factorization}
\begin{aligned}
&\disc_z\bigl(B_{\boldsymbol{a}}(z;x,y)\bigr)=\\
&=16xy(a_3x^3+a_2x^2+a_1x+1)(a_3y^3+a_2y^2+a_1y+1)=0
\end{aligned}
\end{equation}
for every \(x\in \Cc\) and fixed \(y\in \Cc\). From \eqref{B_discriminant_factorization} it follows that the point $y=0$, and the points $1/y$ for which $f(1/y)=0$, are doubling. Let us check when the point $\infty$ is doubling. For the discriminant
$\disc_{z_1}\bigl((x_0y_0z_0)^2B_{\boldsymbol{a}}(z_1/z_0; x_1/x_0, y_1/y_0)\bigr)$
at $(y_1:y_0) =\infty =  (1:0)$, we have
\begin{equation}\label{discriminant_at_infty}
16a_3x_1(x_0^3+a_1x_0^2x_1 + a_2x_0 x_1^2 + a_3 x_1^3).
\end{equation}
Expression \eqref{discriminant_at_infty} vanishes for all $x = (x_1:x_0)$ if and only if $a_3 = 0$. Hence the point $\infty$ is iterating $\Longleftrightarrow$ $a_3 = 0$.   

\item It follows from the proof of part {\bf (i)} that the order of the group $\mathbb{W}$ of doubling points is $4$. Since the group \(\G_{\Cc\!P^1}(B_{\boldsymbol{a}})\) is involutive, every nonzero element of $\mathbb{W}$ has order $2$. Hence $\mathbb{W}\cong \Z/2\times \Z/2$.
\end{enumerate}
\end{proof}

From the classification \cite[Theorem 6.3]{Buchstaber_Veselov19} of symmetric two-algebraic two-valued groups on $\Cc$ (see Definition \ref{sym_n-algebraic_n-valued_definition_first} and Example \ref{universal_2_valued}), it follows that every two-algebraic two-valued group on $\Cc$ is defined by the polynomial $B_{\boldsymbol{a}}(z; x, y)$ for some $\boldsymbol{a}$, whose discriminant admits separation of variables:
$$\disc_z\bigl(B_{\boldsymbol{a}}(z;x,y)\bigr)=16\,x^4\,f(1/x)\;\cdot\; y^4\,f(1/y),$$
see formula (\ref{B_discriminant_factorization}). 

Let us note that the first important application of formula (\ref{B_discriminant_factorization}) was obtained by Dragovi\'c in \cite{Dragovic10} (see also \cite{Dragovic14}), on the basis of the discovery of a remarkable connection between the associativity equation of a two-valued group on $\Cc$ and the method of integration of the Kowalevski top.

The separation-of-variables property in the factorization of the discriminant ceases to hold for the $n$-valued laws $p_n(z; x, y)$ (Example \ref{G_n}) already for $n = 3$. For example,
$$\disc_z(p_3(z; x, y)) = -3^9x^2(x-y)^2y^2.$$

\subsubsection{Three-Valued Structures on $\Cc\!P^1$}

There is a unique projective equivalence class of nonsingular cubics whose automorphism group contains an element of order three; in this case the $j$-invariant is equal to $0$. For each complex number $c\neq 0$, the equianharmonic cubic
$$\E = \{\eta^2 = \zeta^3 + c\}$$
belongs to this class \cite[Theorem 3.1.3]{Dolgachev}. Introduce the slope function
$$m = m((\zeta_1, \eta_1), (\zeta_2, \eta_2)) = \frac{\eta_1 - \eta_2}{\zeta_1 - \zeta_2}.$$
Then the addition law for points $(\zeta_1, \eta_1)$ and $(\zeta_2, \eta_2)$ on the elliptic curve $\E$ takes the form
\begin{equation}\label{sum_formula}
\left\{\begin{aligned}
\zeta_3 &= -\zeta_1 - \zeta_2 + m^2,\nonumber\\
\eta_3 &= m(\zeta_1 - \zeta_3) - \eta_1.
\end{aligned}\right.
\end{equation} 

The curve $\E$ admits the automorphism
$$\begin{aligned}
\phi_3: (\zeta, \eta)&\mapsto (\epsilon \zeta, \eta)\\
\infty&\mapsto \infty
\end{aligned}$$
of order $3$ as an abelian variety, where $\epsilon = e^{2\pi i/3}$. Indeed,
$$\begin{aligned}
\phi_3(\zeta_1, \eta_1)\oplus \phi_3(\zeta_2, \eta_2) &= \left(- \epsilon \zeta_1 - \epsilon \zeta_2 + \left(\frac{\eta_1 - \eta_2}{\epsilon \zeta_1 - \epsilon \zeta_2}\right)^2, \frac{\eta_1 - \eta_2}{\epsilon \zeta_1-\epsilon \zeta_2}(\epsilon \zeta_1 - \epsilon \zeta_3) - \eta_1 \right)\\
&= \phi_3(\zeta_3, \eta_3)
\end{aligned}$$   

The orbit $\{(\zeta, \eta),\ (\epsilon \zeta, \eta), (\epsilon^2 \zeta, \eta)\}$ of the automorphism $\phi_3$ bijectively corresponds to the value of $\eta$. We have a branched three-sheeted covering
\[
\begin{aligned}
\pi_3:\E&\to \Cc\!P^1\\
(\zeta,\eta)&\mapsto \eta,\\
\infty&\mapsto\infty.
\end{aligned}
\]
whose base is identified with the orbit space $\E/\langle \phi_3 \rangle$. {The branch points of $\pi_3$ are the three points with ramification index three: $(0, \pm\sqrt{c})$ and $\infty$.} 

We write the three-valued law:
\begin{align}
\eta_1\ast \eta_2&=[\,\pi_3\bigl((\zeta_1,\eta_1)\oplus (\epsilon^k \zeta_2,\eta_2)\bigr)\mid k=0,1,2\,]\nonumber\\
&=\left[\,m_k\!\left(2\sqrt[3]{\eta_1^2-c}+\epsilon^k\sqrt[3]{\eta_2^2-c}-m_k^2\right)-\eta_1\,\right],\label{3_valued_law_multiset}
\end{align}
where for each $k = 0, 1, 2$ we have
$$m_k=\frac{\eta_1-\eta_2}{\sqrt[3]{\,\eta_1^2-c\,}-\epsilon^k\sqrt[3]{\,\eta_2^2-c\,}}.$$

\begin{theorem}\label{3-valued_group}
On $\Cc\!P^1$, for each nonzero $c\in\Cc$, there exists a structure, which we call equianharmonic, of a {commutative regular} algebraic three-valued coset group $\G_{3, c}(\Cc\!P^1)$ with neutral element $0$, inverse map $\inv(x) = -x$, and multiplication defined by the polynomial $p_{3, c}(z; -x, -y)$, where
$$\begin{aligned}
p_{3, c}(z; x, y) &= \sigma_1^3 - 27\sigma_3\\
&+ 18 c \sigma_1^2 \sigma_3 - 54 c \sigma_2 \sigma_3 - 27 c^2 \sigma_2^2 \sigma_3 + 81 c^2 \sigma_1 \sigma_3^2,
\end{aligned}$$
which is a symmetric polynomial, and $\sigma_k$ denotes the $k$-th elementary symmetric function of $x, y, z$. All such three-valued groups are {coset-isomorphic}.
\end{theorem}

\begin{proof}
{A direct computation in a computer algebra system using Vieta's formulas shows that the polynomial $(xyz)^3p_{3, c}(1/z; -1/x,$ $ -1/y)$ has as its roots the elements of the multiset (\ref{3_valued_law_multiset}). The regularity of the three-valued multiplication is established analogously to the proof of Theorem \ref{2-valued_group}.}
\end{proof}

\begin{prop}\label{iterating_3-val}
The set $\{0, \pm1/\sqrt{c}\}$ of iterating elements of the three-valued group $\G_{3, c}(\Cc\!P^1)$ is, as a three-valued subgroup, the diagonal construction of a single-valued group isomorphic to $\Z/3$.
\end{prop}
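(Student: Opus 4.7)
The plan is to identify the iterating elements with the images (under the Möbius normalization sending the group identity to $0$) of the branch points of the triple covering $\pi:\E\to\Cc\!P^1$, $(x,y)\mapsto y$, and to observe that these branch points form a $\Z/3$-subgroup of $\E$ consisting precisely of the $\phi_3$-fixed $3$-torsion points.

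First I would pin down the iterating elements, by analogy with Example \ref{doubling_elements_Chebyshev}: an element $y_0\in\Cc\!P^1$ is iterating in $\G_{3,\mathrm{eqh}}(\Cc\!P^1)$ iff the discriminant $\Delta_z\bigl(p_{3,\mathrm{eqh}}(z;x,y_0)\bigr)$ vanishes identically in $x$. Rather than expanding that discriminant, I would use the coset description \eqref{c_law}: the value $y_1\ast y_2$ is a repeated root precisely when two of the three preimages $(x_2,y_2),(\epsilon x_2,y_2),(\epsilon^2 x_2,y_2)$ in $\E$ coincide, i.e. when $(x_2,y_2)$ is a fixed point of $\phi_3:(x,y)\mapsto(\eps x,y)$. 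The fixed locus of $\phi_3$ on $\E=\{y^2=x^3+c\}$ consists exactly of $\infty$ and $(0,\pm\sqrt{c})$. Their $\pi$-images are $\infty$ and $\pm\sqrt{c}$; after the Möbius normalization $y\mapsto 1/y$ built into the definition of $\G_{3,\mathrm{eqh}}(\Cc\!P^1)$ (so that $\infty_\E$ becomes the zero $0$), these become $0$ and $\pm 1/\sqrt{c}$. A comparison of degrees (the discriminant $\Delta_z p_{3,\mathrm{eqh}}$, viewed as a polynomial in $y$, has degree $3$ after removing the $x$-dependence) confirms that no further iterating elements occur.

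Next I would check that $\W=\{\infty,(0,\sqrt c),(0,-\sqrt c)\}\subset\E$ is a subgroup isomorphic to $\Z/3$. The tangent to $\E$ at $(0,\sqrt c)$ is horizontal, meeting $\E$ with multiplicity $3$ at that point; hence $(0,\sqrt c)$ has order $3$ in $(\E,\oplus)$, and $(0,-\sqrt c)$ is its inverse. Thus $\W\cong\Z/3$, and $\W$ is precisely the $\phi_3$-invariant part of $\E[3]$.

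Finally I would verify that $\W$ descends to a $3$-valued diagonal subgroup: for $w_1,w_2\in\W$ one has
\[
\pi(w_1)\ast\pi(w_2)=\bigl[\pi(w_1\oplus\phi_3^{k}(w_2))\mid k=0,1,2\bigr]=\bigl[\pi(w_1\oplus w_2),\pi(w_1\oplus w_2),\pi(w_1\oplus w_2)\bigr],
\]
where the middle equality uses $\phi_3(w_2)=w_2$. Since $w_1\oplus w_2\in\W$, the result lies in $\pi(\W)=\{0,\pm 1/\sqrt c\}$ and is a triple point; this exhibits the three-element subset as the $3$-diagonal of the $1$-valued group $\pi(\W)\cong\W\cong\Z/3$. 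The only real subtlety is the uniqueness claim in the first step (that no non-branch point is iterating), but this follows from the fact that outside the $\phi_3$-fixed locus the three summands in \eqref{c_law} are genuinely distinct for generic $y_1$, so the discriminant does not vanish identically in $x$.
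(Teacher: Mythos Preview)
Your approach is correct and genuinely different from the paper's. The paper proceeds purely computationally: it writes out the discriminant $\Delta_z(p_{3,\mathrm{eqh}})$ explicitly as
\[
-3^9 x^2 y^2 (cx^2-1)^2 (cy^2-1)^2 (x-y)^2 \bigl(9c^2x^2y^2 - cx^2 + 8cxy - cy^2 + 1\bigr)^2,
\]
reads off the purely-$y$ factor $y^2(cy^2-1)^2$ to get the iterating set, and then checks by direct substitution that $w=1/\sqrt{c}$ satisfies $w\ast w=[-w,-w,-w]$ and $w\ast(-w)=[0,0,0]$. Your argument instead identifies the iterating locus geometrically as the image of the $\phi_3$-fixed set in $\E$, recognizes this as a $\Z/3$ subgroup of flex points, and deduces the diagonal structure from $\phi_3$-invariance. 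Your route explains \emph{why} the answer is $\Z/3$ and would transfer to the $4$- and $6$-valued cases without new computation; the paper's route is self-contained and requires no further facts about the group law on $\E$.

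One point to tighten: you assume a M\"obius normalization $y\mapsto 1/y$ is ``built into'' the definition of $\G_{3,\mathrm{eqh}}(\Cc\!P^1)$, sending the coset identity $\infty$ to the stated zero $0$ and the branch values $\pm\sqrt{c}$ to $\pm 1/\sqrt{c}$. The paper does not state this explicitly; the proof of Theorem~\ref{3-valued_group} even asserts that $p_{3,\mathrm{eqh}}$ has the multiset \eqref{c_law} directly as its roots. Since in fact $p_{3,\mathrm{eqh}}(z;x,0)=(z-x)^3$ (so $0$ is neutral for the polynomial law) whereas the coset law \eqref{c_law} visibly has neutral element $\infty$, some normalization is indeed present, and yours is the right one---but you should verify it rather than assert it, or else bypass the issue entirely by noting that isomorphisms preserve iterating elements, so the three $\phi_3$-fixed images must land on $\{0,\pm 1/\sqrt{c}\}$ under whichever identification is used.
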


\begin{proof}
Let $y$ be an iterating element in $\G_{3, c}(\Cc\!P^1)$ and {$y\neq\infty$}. Then for every $x\in\Cc$ we have the vanishing of the discriminant of the polynomial $p_{3, c}(z; x, y)$:
$$-3^9 x^2 y^2 \left(c x^2-1\right)^2 \left(c y^2-1\right)^2 (x-y)^2 \left(9 c^2 x^2 y^2-c x^2+8 c x y-c y^2+1\right)^2 = 0.$$
It follows that the elements $0$, $\pm1/\sqrt{c}$ are iterating. 

{Now let $y=\infty = (1:0)$. For the discriminant of the homogeneous polynomial $p_{3, c}((z_1:z_0); (-x_1:x_0),(1:0))$ we have the expression
$$-19683\,c^{4}x_{0}^{2}x_{1}^{2}\left(-x_{0}^{2}+cx_{1}^{2}\right)^{2}\left(-x_{0}^{2}+9cx_{1}^{2}\right)^{2},$$
which is not identically zero for $c\neq 0$. Hence $\infty$ is not an iterating element.}  

Let $w = 1/\sqrt{c}$. We have the following multiplication table:
$$\begin{aligned}
 w\ast w = [-w, &-w, -w],\quad (-w)\ast(-w) = [w,w,w],\\
 -&w\ast w = w\ast (-w) = [0, 0, 0].
\end{aligned}$$
This shows that the iterating elements are endowed with a group structure isomorphic to the group $\diag(\Z/3)$.
\end{proof}

\subsubsection{Four-Valued Structures on $\Cc\!P^1$}

There is a unique projective equivalence class of nonsingular cubics whose automorphism group is isomorphic to $\Z/4$; in this case the $j$-invariant is equal to $1728$. This class contains the harmonic cubics \cite[Theorem 3.1.3]{Dolgachev}
$$\E = \{\eta^2 = \zeta^3 + b\zeta\}.$$
Consider the map
\[
\begin{aligned}
\phi_4:\ \E&\to \E\\
(\zeta,\eta)&\mapsto (-\zeta,\, i \eta)\\
\infty &\mapsto\infty.
\end{aligned}
\]
It is easy to see that $\phi_4$ is an automorphism of the abelian variety $\E$. The orbit space $\E/\langle \phi_4 \rangle$ is identified with the fibers of the projection
$$\begin{aligned}
\pi_4: \E&\to \Cc\!P^1,\\
(\zeta, \eta)&\mapsto \zeta^2,\\
\infty &\mapsto\infty.
\end{aligned}$$
{The branch points of $\pi_4$ are the two points of order four, $(0,0)$ and $\infty$, and the two points of order two, $(\pm\sqrt{-b}, 0)$.} 

Let $x = \zeta_1^2$, $y = \zeta_2^2$ be points of $\CP^1$ for some $\zeta_1,\zeta_2\in\CP^1$. We write the four-valued addition law:
\begin{equation}
\begin{aligned}
\label{4_valued_law_multiset}
x\ast y
&=\bigl[\,\pi_4\bigl((\zeta_1,\eta_1)\oplus\phi_4^{\,r}(\zeta_2,\eta_2)\bigr)\mid r=0,\dots,3\,\bigr]\\
&=\left[\left(-\sqrt{x}-(-1)^{\ell}\sqrt{y}+m_{\ell,k}^{\,2}\right)^{\!2}\ \middle|\ \ k, \ell = 0,1\right],
\end{aligned}
\end{equation}
where
$$m_{\ell,k}
=\frac{\sqrt{\sqrt{x^{3}}+b\sqrt{x}}-(-1)^k\cdot\sqrt{(-1)^{\, \ell}\bigl(\sqrt{y^{3}}+b\sqrt{y}\bigr)}}{\sqrt{\,x}-(-1)^{\,\ell}\sqrt{\,y}}.$$

\begin{theorem}\label{4-valued_group}
On $\Cc\!P^1$, for each nonzero $b\in\Cc$, there exists a structure, which we call harmonic, of a commutative involutive {regular} algebraic four-valued coset group $\G_{4, b}(\Cc\!P^1)$ with neutral element $0$ and multiplication defined by the symmetric polynomial
\begin{align*}
p_{4, b}(z; x, y) &= \sigma_{1}^{4}
- 8\, \sigma_{1}^{2} \sigma_{2}
+ 16\, \sigma_{2}^{2}
- 128\, \sigma_{1} \sigma_{3}\\
&- 112\, b\, \sigma_{1}^{2} \sigma_{3}
- 4\, b^{2} \sigma_{1}^{3} \sigma_{3}
- 64\, b\, \sigma_{2} \sigma_{3}
- 112\, b^{2} \sigma_{1} \sigma_{2} \sigma_{3}
- 64\, b^{2} \sigma_{3}^{2}\\
&- 288\, b^{3} \sigma_{1} \sigma_{3}^{2}
+ 6\, b^{4} \sigma_{1}^{2} \sigma_{3}^{2}
- 136\, b^{4} \sigma_{2} \sigma_{3}^{2}
- 112\, b^{5} \sigma_{3}^{3}
- 4\, b^{6} \sigma_{1} \sigma_{3}^{3}
+ b^{8} \sigma_{3}^{4},
\end{align*}
where $\sigma_k$ denotes the $k$-th elementary symmetric function of $x, y, z$. All such four-valued groups are {coset-isomorphic.} 
\end{theorem}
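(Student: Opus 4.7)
I would follow the same strategy used in the proof of Theorem \ref{3-valued_group}, modulo the increase in the order of the acting automorphism. The coset construction of \cite[Theorem 1]{Buchstaber}, applied to the points of the harmonic cubic $\E = \{y^2 = x^3 + bx\}$ and to the cyclic subgroup $\langle\phi_4\rangle \subset \Aut(\E)$ of order $4$, automatically yields an associative algebraic $4$-valued structure on the orbit space $\E/\langle\phi_4\rangle$, which is identified with $\Cc\!P^1$ via the branched double cover $\pi_4:(x,y)\mapsto x^2$. What remains is (a) to rewrite the law as the vanishing locus of a polynomial, (b) to verify involutivity, and (c) to establish uniqueness up to isomorphism in $b$.

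The central step is (a). Starting from the $4$-valued multiset \eqref{3-valued_law}, I would apply Vieta's formulas to its four elements to obtain the coefficients of a monic quartic in $z$. The key algebraic point is that the four values form a full $\langle\phi_4\rangle$-orbit with respect to the choice of square roots in the parametrization, so their elementary symmetric functions $e_1,\dots,e_4$ lie in $\Cc(x_1,x_2)$ rather than in a radical extension. After expansion and the substitution $y_i^2 = x_i^3 + bx_i$, these symmetric functions should match the coefficients of the stated $p_{4,\har}(z;x,y)$ — a single direct symbolic calculation, best performed by a CAS. The main subtlety is bookkeeping of the four sign choices $(k,\ell)\in\{0,1\}^2$ and controlling the cancellations that eliminate spurious radicals.

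For (b), involutivity is immediate from the identity $\phi_4^{\,2} = \sigma$, where $\sigma:(x,y)\mapsto(x,-y)$ is the inversion on $\E$. Indeed, for every $g\in\E$ one has $[g]^{-1}=[\sigma(g)]=[\phi_4^{\,2}(g)]=[g]$ in the quotient, so every element of $\E/\langle\phi_4\rangle$ coincides with its own inverse. The neutral element is the class of $\infty\in\E$, that is $\pi_4(\infty)=\infty\in\Cc\!P^1$; a Möbius change of chart — precisely as in the passage from $\G_{\Cc\!P^1}(D_{\bm{a}})$ to $\G_{\Cc\!P^1}(B_{\bm{a}})$ in Theorem \ref{2_valued_group} — moves the neutral element to $0$ and presents the law as $p_{4,\har}(z;x,y)$.

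For (c), all harmonic cubics have $j$-invariant $1728$, and any two of them are related by the rescaling $(x,y)\mapsto(\lambda^2 x,\lambda^3 y)$, which sends $b$ to $b/\lambda^4$. A direct check shows this rescaling intertwines $\phi_4$ with itself, so it descends to the Möbius map $z\mapsto\lambda^4 z$ on the quotient $\Cc\!P^1$ and provides the required isomorphism of coset $4$-valued groups in the sense of Definition \ref{isomorphisms}. The principal obstacle I anticipate is step (a): producing $p_{4,\har}$ with coefficients matching the listed terms up to $b^8 e_3^{\,4}$ requires a substantial symbolic computation, even when automated.
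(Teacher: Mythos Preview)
Your proposal is correct and matches the paper's approach: the paper gives no explicit proof here, deferring to the method of Theorem~\ref{3-valued_group} (a direct Vieta computation on the multiset~\eqref{3-valued_law}). Your plan is in fact more thorough, explicitly treating involutivity via $\phi_4^{\,2}=\sigma$, the M\"obius change moving the neutral element to $0$, and uniqueness via the rescaling $(x,y)\mapsto(\lambda^2 x,\lambda^3 y)$ of harmonic cubics---details the paper leaves implicit.
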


\begin{proof}
{A direct computation in a computer algebra system using Vieta's formulas shows that the polynomial $(xyz)^4p_{4, b}(1/z; 1/x, 1/y)$ has as its roots the elements of the multiset (\ref{4_valued_law_multiset}). The regularity of the four-valued multiplication is established analogously to the proof of Theorem \ref{2-valued_group}.}  
\end{proof}

\begin{prop}\label{iterating_4-val}
{The iterating elements of the four-valued group $\G_{4, b}(\Cc\!P^1)$ form the set $\{0, -1/b, \infty\}$, which is a four-valued subgroup with neutral element $0$ and multiplication table
$$\begin{aligned}
\infty\ast\infty &= [0,0,0,0],\quad \infty\ast\left(-\frac{1}{b}\right) = \left [ -\frac{1}{b}, -\frac{1}{b}, -\frac{1}{b}, -\frac{1}{b} \right],\\
&\left(-\frac{1}{b}\right)\ast\left(-\frac{1}{b}\right) = [0,0,\infty,\infty].
\end{aligned}$$
This subgroup is isomorphic to the group $\diag_2((\Z/4)/{\sigma})$, where $\diag_2$ denotes the double diagonal and $(\Z/4)/{\sigma}$ is the two-valued coset group constructed from the additive group $\Z/4$ and the involution $\sigma:a\mapsto -a$ for all $a\in\Z/4$.} 
\end{prop}

\begin{proof}
Let $y$ be an iterating element in $\G_{4, b}(\Cc\!P^1)$ and let $y\neq\infty$. Then for every $x\in\Cc$ we have the vanishing of the discriminant of the polynomial $p_{4,b}(z; x, y)$:
$$\begin{aligned}
&x^3 y^3 (b x+1)^2 (b y+1)^2 (x-y)^2 \left(b^2 x y-1\right)^2 \left(b^2 x^2+4 b^2 x y+2 b x+1\right)^2\\
&\left(b^2 x^2 y+2 b x y+4 x+y\right)^2 \left(4 b^2 x y+b^2 y^2+2 b y+1\right)^2 \left(b^2 x y^2+2 b x y+x+4 y\right)^2 = 0.
\end{aligned}$$
Thus, the elements $0$ and $-1/b$ are iterating elements of the group $\G_{4, b}(\Cc\!P^1)$. 

{Consider the case $y = \infty = (1:0)$. We have the equality in homogeneous coordinates:
$$p_{4,b}((z_1:z_0); (x_1:x_0), (1:0)) = (-x_0z_0 + b^2x_1z_1)^4.$$
It follows that the point $\infty$ is iterating. The explicit multiplication table for the set $\{0, -1/b, \infty\}$ of iterating elements is obtained by direct computations. The isomorphism with the four-valued group $\diag_2((\Z/4)/{\sigma})$ is established by the bijection $0\mapsto 0$, $(-1/b)\mapsto 1$, and $\infty\mapsto 2$.} 
\end{proof}

\begin{remark}
We point out that \cite{K_class} contains a complete classification of $n$-valued monoids and groups of third order. 
\end{remark}

\subsubsection{Six-Valued Structures on $\Cc\!P^1$}
There is a unique projective equivalence class of nonsingular cubics, with $j$-invariant equal to $0$, whose automorphism group is isomorphic to $\Z/6$. For each complex number $c\neq 0$, the equianharmonic cubic
\begin{equation*}
\E = \{\eta^2 = \zeta^3 + c\}
\end{equation*}
belongs to this class \cite[Theorem 3.1.3]{Dolgachev}. Consider the map ($\epsilon = e^{2\pi i/3}$)
$$\begin{aligned}
\phi_6: \E&\to \E\\
(\zeta, \eta)&\mapsto (\epsilon \zeta, -\eta),\\
\infty&\mapsto\infty.
\end{aligned}$$
It is easy to see that $\phi_6$ is an automorphism of the abelian variety $\E$. The points of the orbit space $\E/\langle \phi_6 \rangle$ are identified with the fibers of the projection
$$\begin{aligned}
\pi_6: \E&\to \Cc\!P^1\\
(\zeta, \eta)&\mapsto \eta^2,\\
\infty&\mapsto\infty.
\end{aligned}$$
{The branch points of $\pi_6$ are the two points of index three, $(0,\pm\sqrt{c})$, the three points of index two, $(\epsilon^k\sqrt[3]{-c},0)$, $k=1,2,3$, and the point with ramification index six, $\infty$.}

Let $x = \eta_1^2$, $y = \eta_2^2$ be points of $\CP^1$ for some $\eta_1,\eta_2\in\CP^1$. We write the six-valued addition law:
\begin{equation}
\begin{aligned}\label{6_valued_law_multiset}
&x\ast y = [\pi_6((\zeta_1, \eta_1)\oplus \phi_6^{\, r}(\zeta_2, \eta_2))\mid r = 0, ..., 5] \\
&= \left[\left(m_{\ell, k}\left ( 2\sqrt[3]{\,x - c} + \epsilon^k\sqrt[3]{\,y - c} - m_{\ell, k}^2 \right ) - \sqrt{x}\right)^2\mid \ k = 0, 1, 2; \ell = 0, 1\right],
\end{aligned}
\end{equation}
where
$$m_{\ell, k} = \frac{\sqrt{x} - (-1)^{\ell}\sqrt{y}}{\sqrt[3]{\,x - c} - \epsilon^{\, k}\cdot\sqrt[3]{\,y - c}}.$$

\begin{theorem}\label{6-valued_group}
On $\Cc\!P^1$, for each nonzero $c\in\Cc$, there exists a structure, which we call equianharmonic, of a commutative involutive {regular} six-valued algebraic coset group $\G_{6, c}(\Cc\!P^1)$ with neutral element $0$ and multiplication defined by the symmetric polynomial
\begin{align*}
p_{6, c}(z; x, y) &=  \sigma_{1}^{6}
- 2^{2}\cdot 3\, \sigma_{1}^{4} \sigma_{2}
+ 2^{4}\cdot 3\, \sigma_{1}^{2} \sigma_{2}^{2}
- 2^{6} \sigma_{2}^{3}
- 2\cdot 3^{4}\cdot 17\, \sigma_{1}^{3} \sigma_{3}\\
& - 2^{3}\cdot 3^{4}\cdot 19\, \sigma_{1} \sigma_{2} \sigma_{3}
+ 3^{3}\cdot 19^{3} \sigma_{3}^{2}\\
& - 2^{5}\cdot 3^{2}\cdot 11\, c\, \sigma_{1}^{4} \sigma_{3}
- 2^{2}\cdot 3^{3}\cdot 5\, c^{2} \sigma_{1}^{5} \sigma_{3} 
- 2^{4}\cdot 3^{2}\cdot 211\, c\, \sigma_{1}^{2} \sigma_{2} \sigma_{3}\\
& - 2^{2}\cdot 3^{3}\cdot 197\, c^{2} \sigma_{1}^{3} \sigma_{2} \sigma_{3}
- 2^{3}\cdot 3^{5} c^{3} \sigma_{1}^{4} \sigma_{2} \sigma_{3}
- 2^{6}\cdot 3^{2}\cdot 5^{2} c\, \sigma_{2}^{2} \sigma_{3}\\
& - 2^{4}\cdot 3^{3}\cdot 107\, c^{2} \sigma_{1} \sigma_{2}^{2} \sigma_{3}
- 2^{4}\cdot 3^{7} c^{3} \sigma_{1}^{2} \sigma_{2}^{2} \sigma_{3}
- 2\cdot 3^{6} c^{4} \sigma_{1}^{3} \sigma_{2}^{2} \sigma_{3}\\
& - 2^{6}\cdot 3^{5} c^{3} \sigma_{2}^{3} \sigma_{3}
- 2^{3}\cdot 3^{7} c^{4} \sigma_{1} \sigma_{2}^{3} \sigma_{3}
+ 2^{4}\cdot 3^{3}\cdot 7^{2}\cdot 19\, c\, \sigma_{1} \sigma_{3}^{2}\\
& + 2^{2}\cdot 3^{3}\cdot 47\cdot 53\, c^{2} \sigma_{1}^{2} \sigma_{3}^{2}
+ 2^{3}\cdot 3^{5}\cdot 61\, c^{3} \sigma_{1}^{3} \sigma_{3}^{2}
+ 2\cdot 3^{7}\cdot 17\, c^{4} \sigma_{1}^{4} \sigma_{3}^{2}\\
& + 2^{2}\cdot 3^{4}\cdot 701\, c^{2} \sigma_{2} \sigma_{3}^{2}
+ 2^{3}\cdot 3^{6}\cdot 7\cdot 13\, c^{3} \sigma_{1} \sigma_{2} \sigma_{3}^{2}
+ 2^{10}\cdot 3^{6} c^{4} \sigma_{1}^{2} \sigma_{2} \sigma_{3}^{2}\\
& + 2^{4}\cdot 3^{8}\cdot 5\, c^{5} \sigma_{1}^{3} \sigma_{2} \sigma_{3}^{2}
+ 2\cdot 3^{7}\cdot 5\cdot 17\, c^{4} \sigma_{2}^{2} \sigma_{3}^{2}
+ 2^{5}\cdot 3^{8}\cdot 7\, c^{5} \sigma_{1} \sigma_{2}^{2} \sigma_{3}^{2}\\
& + 2^{2}\cdot 3^{9}\cdot 17\, c^{6} \sigma_{1}^{2} \sigma_{2}^{2} \sigma_{3}^{2} 
+ 2^{2}\cdot 3^{9}\cdot 11\, c^{6} \sigma_{2}^{3} \sigma_{3}^{2}
+ 2^{3}\cdot 3^{11} c^{7} \sigma_{1} \sigma_{2}^{3} \sigma_{3}^{2}\\
& + 3^{12} c^{8} \sigma_{2}^{4} \sigma_{3}^{2}
- 2^{3}\cdot 3^{12} c^{3} \sigma_{3}^{3}
- 2\cdot 3^{9}\cdot 5\cdot 47\, c^{4} \sigma_{1} \sigma_{3}^{3}\\
& - 2^{4}\cdot 3^{9}\cdot 17\, c^{5} \sigma_{1}^{2} \sigma_{3}^{3}
- 2^{2}\cdot 3^{9}\cdot 5\, c^{6} \sigma_{1}^{3} \sigma_{3}^{3}
- 2^{6}\cdot 3^{11} c^{5} \sigma_{2} \sigma_{3}^{3}\\
& - 2^{2}\cdot 3^{10}\cdot 5\cdot 11\, c^{6} \sigma_{1} \sigma_{2} \sigma_{3}^{3}
- 2^{3}\cdot 3^{11} c^{7} \sigma_{1}^{2} \sigma_{2} \sigma_{3}^{3}
- 2^{3}\cdot 3^{11}\cdot 5\, c^{7} \sigma_{2}^{2} \sigma_{3}^{3}\\
& - 2\cdot 3^{12} c^{8} \sigma_{1} \sigma_{2}^{2} \sigma_{3}^{3}
- 2^{3}\cdot 3^{12} c^{6} \sigma_{3}^{4}
- 2^{3}\cdot 3^{12} c^{7} \sigma_{1} \sigma_{3}^{4}\\
& + 3^{12} c^{8} \sigma_{1}^{2} \sigma_{3}^{4}
- 2^{2}\cdot 3^{13} c^{8} \sigma_{2} \sigma_{3}^{4},
\end{align*}
where $\sigma_k$ is the $k$-th elementary symmetric function. All such six-valued groups are {coset-isomorphic.}

\begin{proof}
{A direct computation in a computer algebra system using Vieta's formulas shows that the polynomial $(xyz)^6p_{6, c}(1/z; -1/x,$ $ -1/y)$ has as its roots the elements of the multiset (\ref{6_valued_law_multiset}). The regularity of the six-valued multiplication is established analogously to the proof of Theorem \ref{2-valued_group}.}  
\end{proof}

\begin{prop}\label{iterating_6-val}
{The set $\{0, -1/c, \infty\}$ of iterating elements of the six-valued group $\G_{6, c}(\Cc\!P^1)$ is not a six-valued subgroup of any kind {\normalfont(}nor even a submonoid{\normalfont)}.} 
\end{prop}

\begin{proof}
Let $y$ be an iterating element in $\G_{6, c}(\Cc\!P^1)$, {$y\neq \infty$}. Then for every $x\in\Cc$ we have the vanishing of the discriminant of the polynomial $p_{6, c}(z; x, y)$:
$$2^{12}3^{18}x^5 y^5 (c x+1)^4 (c y+1)^4 (x-y)^4 \left(27 c^2 x^3+81 c^2 x^2 y+54 c x^2+18 c x y+27 x+y\right)^4...$$
It follows that the elements $0$ and $(-1/c)$ are iterating.

{Let $y = \infty = (1:0)$. Then
$\disc_{z_1}((x_0y_0z_0)^6p_{6,c}(z_1/z_0; x_1/x_0, y_1/y_0)) \equiv 0$.
Hence $\infty$ is an iterating element.} 

{The value of the polynomial $(x_0y_0z_0)^6p_{6,c}(z_1/z_0; x_1/x_0, y_1/y_0)$ at $x = (1:0)$ and $y = (-1/c:1)$ is $(z_0 + 9 c z_1)^6$. Therefore, the set of iterating elements is not closed under the six-valued multiplication.}
\end{proof}
\end{theorem}

\subsection{The Nodal Case}

We now turn to the consideration of singular cubics. 

\begin{example}
The change of variables $x,y,z\mapsto x+1, y+1, z+1$ shows that the algebraic two-valued monoid $\M_{\mult}(\Cc\!P^1)$ from Example \ref{Chebyshev} is isomorphic to the monoid $\G_{\Cc\!P^1}(B_{\boldsymbol{a}})$ with $a_1 = 1$, $a_2 = a_3 = 0$. This monoid corresponds to the cubic $\{\eta^2 = \zeta^2(\zeta + 1)\}$.
\end{example}

Let $\boldsymbol{a}$ be such that the polynomial
\begin{equation}
P(\zeta) = \zeta^3 + a_1\zeta^2 + a_2\zeta + a_3
\end{equation}
has a root of multiplicity $2$. In this case, the equation of the curve $\E$ takes the form $(\alpha\neq\beta)$:
\begin{equation}\label{nodal_curve}
\eta^2 = (\zeta - \alpha)^2(\zeta - \beta).
\end{equation}
We parametrize $\E$ by the slope $m$ of a line passing through the node $\Oo = (\alpha, 0)$:
$$\left\{\begin{aligned}
\zeta &= m^2 + \beta,\\
\eta &= m\left(m^2 + \beta - \alpha \right ).
\end{aligned}\right.$$

As before, there is a branched double covering
$$\begin{aligned}
\E &\to \Cc\!P^1\\
(\zeta(m), \eta(m))&\mapsto \zeta(m),\\
\infty&\mapsto\infty,
\end{aligned}$$
with branch points at $\alpha$, $\beta$, and $\infty$. 

\begin{lemma}\label{lemma_2}
Let $m_1\oplus m_2 = -m_3$ for the points $m_1, m_2, m_3\in \Cc\!P^1$ of the curve $\E$ with respect to the parametrization under consideration. Then
\begin{equation}\label{m3_node_case}
m_3 = \frac{m_1m_2 - m_{+}m_{-}}{m_1 + m_2},
\end{equation}
where $m_{\pm} = \pm\sqrt{\alpha - \beta}$ and $[m_1, m_2]\neq [m_{+}, m_{-}]$.
\end{lemma}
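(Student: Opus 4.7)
The plan is to derive the formula directly from the chord–tangent description of the group law on the smooth locus of the nodal cubic $\E$. On a Weierstrass cubic $y^2=f(x)$, three points are collinear if and only if they sum to zero in the group (with identity at $\infty$), and the inverse of $(x,y)$ is $(x,-y)$. Under the slope parametrization of the lemma, $m\mapsto (m^{2}+\beta,\ m(m^{2}+\beta-\alpha))$, the involution $y\mapsto -y$ corresponds to $m\mapsto -m$; so the hypothesis $m_1\oplus m_2=-m_3$ says exactly that the three parametrized points with parameters $m_1,m_2,m_3$ lie on a common line.

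The core computational step is then to intersect $\E$ with a generic line $y=ax+c$. Substituting $x=m^{2}+\beta$ and $y=m^{3}+(\beta-\alpha)m$ yields a monic cubic in $m$ whose $m^{2}$-coefficient is $-a$ and whose $m^{1}$-coefficient is $\beta-\alpha$; its three roots are precisely the parameters of the three intersection points with the smooth locus of $\E$. Vieta's formula for the second elementary symmetric function gives
\[
m_1 m_2 + m_3(m_1+m_2)=\beta-\alpha.
\]
Since $m_{+}m_{-}=-(\alpha-\beta)=\beta-\alpha$, solving for $m_3$ produces the claimed expression \eqref{m3_node_case} after aligning the chord–tangent sign convention (the group sum equals the inverse of the third intersection) with the statement $m_1\oplus m_2=-m_3$.

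Finally, I would deal with the exceptional configurations. When $m_1+m_2=0$ but $\{m_1,m_2\}\ne\{m_+,m_-\}$, one has $P_2=-P_1$ and $P_1\oplus P_2=\infty$, which matches the formula interpreted projectively. The truly degenerate case $[m_1,m_2]=[m_+,m_-]$ is exactly where numerator and denominator vanish simultaneously: geometrically, both of $m_+$ and $m_-$ parametrize the node $(\alpha,0)$, and the group law on $\E$ is not defined at the singular point, which justifies the exclusion in the hypothesis. The main obstacle I anticipate is not any single step but the sign/normalization bookkeeping: one must carefully track the chord–tangent sign and the identification of the inverse as $m\mapsto -m$ in order to recover the precise form \eqref{m3_node_case} from Vieta's identity.
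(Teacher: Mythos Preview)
Your proposal is correct and follows essentially the same approach as the paper: both arguments rest on the chord--tangent collinearity ``$m_1\oplus m_2\oplus m_3=0$ iff the three parametrized points are collinear,'' combined with the observation that negation on $\E$ is $m\mapsto -m$. The only difference is packaging: the paper encodes collinearity via the vanishing of the $3\times 3$ determinant $\det(x_i,y_i,1)$ and then factors it as $(m_1-m_2)(m_2-m_3)(m_1-m_3)$ times a linear factor in $m_3$, whereas you substitute the parametrization into a generic line $y=ax+c$ to obtain a monic cubic in $m$ and read off the relation $m_1m_2+m_1m_3+m_2m_3=\beta-\alpha=m_+m_-$ directly from Vieta. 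These are equivalent computations, and the paper likewise appeals to continuity for the tangent case $m_1=m_2$. Your anticipation of sign bookkeeping is apt: reconciling the Vieta relation with the exact form \eqref{m3_node_case} requires care with the convention for which of $m_3$ or $-m_3$ labels the third intersection point.
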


\begin{proof}
The points $(\zeta_1, \eta_1)$, $(\zeta_2, \eta_2)$, and $(\zeta_3, -\eta_3)$ of the curve $\E$, with slopes $m_j = \eta_j/(\zeta_j - \alpha)$, where $x_j\neq \alpha$ and $j = 1, 2, 3$, such that $m_1\oplus m_2 = -m_3$, lie on one line. Hence
$$\det\left (\begin{matrix} \zeta_1 & \eta_1 & 1\\ \zeta_2 & \eta_2 & 1\\ \zeta_3 & \eta_3 & 1  \end{matrix} \right ) = 0.$$
It follows that
$$(m_1 - m_2)(m_2 - m_3)(m_1 - m_3)(m_1m_2 - m_2m_3 - m_1m_3 -m_{+}m_{-}) = 0.$$
If $m_1\neq m_2$, the required assertion follows immediately, since in this case the points $m_j$ are pairwise distinct; otherwise $m_1$ or $m_2$ would be singular. For the case $m_1 = m_2$, the value of $m_3$ is given by the same formula (\ref{m3_node_case}) by continuity. 
\end{proof}

\begin{prop}\label{E_cong_G_m_mult_case}
The Möbius transformation
\begin{equation}\label{Mobius_transform}
m\mapsto \frac{m + m_{-}}{m + m_{+}}
\end{equation}
establishes an isomorphism of $1$-valued algebraic monoids $\E\cong M_{\mult}(\Cc\!P^1)$.
\end{prop}

\begin{proof}
Let $m_1\oplus m_2 = -m_3$. It is enough to prove the identity
\begin{equation}\label{identity_for_mj}
\frac{m_1 + m_{-}}{m_1 + m_{+}}\cdot \frac{m_2 + m_{-}}{m_2 + m_{+}}\cdot \frac{-m_3 + m_{-}}{-m_3 + m_{+}} = 1.
\end{equation}
Consider the polynomial
$$Q(\zeta) = (\zeta + m_1)(\zeta + m_2)(\zeta - m_3).$$
Let $a = m_{+} = -m_{-}$. By Vieta's formulas and Lemma \ref{lemma_2}, we have
$$Q(\zeta) = \zeta^3 + (m_1 + m_2 - m_3) \zeta^2 - a^2 \zeta - m_1m_2m_3.$$
It is clear that $Q(a) = Q(-a)$, and hence we obtain the desired identity (\ref{identity_for_mj}).
\end{proof}

\begin{example}\label{M_nodal}
Consider the involution $\iota$ on the monoid $\E(\Cc)$ such that $\iota: m\mapsto -m$ for $m\in\Cc$ and $\iota(\infty) = \infty$. Then the orbit space $\Cc\!P^1/\langle \iota\rangle$ is identified with $\Cc\!P^1$ by means of the map
$$\begin{aligned}
\psi: \Cc\!P^1&\to \Cc\!P^1\\
w&\mapsto w^2.
\end{aligned}$$
The monoid $\Cc\!P^1$ and the involution $\iota$ correspond to the coset two-valued algebraic monoid $\M_{\node}(\Cc\!P^1) = \E_{\langle \iota \rangle}$ with operation
\begin{equation}\label{node_law}
m_1\ast m_2 = \left[ \left(\cfrac{\sqrt{m_1}\sqrt{m_2} \pm a}{\sqrt{m_1} \pm \sqrt{m_2}}\right)^2\right]
\end{equation}
defined on the set $\Sym^2(\Cc\!P^1)\backslash\{[a, a]\}$, where $a = \alpha - \beta$. The values of $m_1\ast m_2$ are the roots of the quadratic trinomial
$$(m_1 - m_2)^2z^2 - 2 \left(a^2 (m_1 + m_2)-4 a m_1 m_2+m_1 m_2 (m_1+m_2)\right)z + (a^2 - m_1 m_2)^2.$$

Writing the addition law (\ref{node_law}) in the original coordinates $(\zeta, \eta)$ of the curve $\E$, we obtain the algebraic law
$$\zeta_1\ast \zeta_2 = \left[ \left( \cfrac{\sqrt{\zeta_1-\beta}\sqrt{\zeta_2-\beta}\pm(\alpha-\beta)}{\sqrt{\zeta_1-\beta}\pm\sqrt{\zeta_2-\beta}} \right)^2+\beta\right].$$
The values of $\zeta_1\ast \zeta_2$ are the roots, in $z$, of the symmetric {Kontsevich polynomial}
$$D_{\alpha, \beta}(-z; -\zeta_1, -\zeta_2) = (\zeta_1\zeta_2z)^2B_{\alpha, \beta}(1/z; 1/{\zeta_1}, 1/{\zeta_2}),$$
where
$$\begin{aligned}
&B_{\alpha, \beta}(z; \zeta_1, \zeta_2) =\left(\left(\alpha ^2 \zeta_1 \zeta_2-1\right)^2-4 \alpha  \beta  \zeta_1 \zeta_2 (\alpha  \zeta_1-1) (\alpha  \zeta_2-1)\right)z^2 \\
&-2\left(-2 \beta  \zeta_1 \zeta_2 (\alpha  \zeta_1-1) (\alpha  \zeta_2-1)+\alpha  \zeta_1 \zeta_2 (\alpha  (\zeta_1+\zeta_2)-4)+\zeta_1+\zeta_2\right)z\\
&+(\zeta_1 - \zeta_2)^2.
\end{aligned}$$
In elementary symmetric functions $\sigma_k$:
$$\begin{aligned}
&B_{\alpha, \beta}(z; \zeta_1, \zeta_2) = \\
&=\sigma_1^2+\sigma_1 \sigma_3 \left(-2 \alpha ^2-4 \alpha  \beta \right)+4 \alpha ^2 \beta  \sigma_2 \sigma_3-4 \sigma_2+\sigma_3^2 \left(\alpha ^4-4 \alpha ^3 \beta \right)+\sigma_3 (8 \alpha +4 \beta ).
\end{aligned}$$
From Vieta's formulas it follows that the polynomial $B_{\alpha, \beta}(z; \zeta_1, \zeta_2)$ coincides with the Buchstaber polynomial $B_{\boldsymbol{a}}(z; \zeta_1, \zeta_2)$ for
$$\left\{\begin{aligned}
a_1 &= -2\alpha - \beta,\\
a_2 &= \alpha^2 + 2\alpha\beta,\\
a_3 &= -\alpha^2\beta
\end{aligned}\right.$$
In other words, the polynomial $D_{\alpha, \beta}(-z; -\zeta_1, -\zeta_2)$ is the Kontsevich polynomial $D_{\boldsymbol{a}}(-z; -\zeta_1, -\zeta_2)$ with parameters lying on the singular divisor $\{\delta_{\boldsymbol{a}} = 0\}$. From the projective classification of singular cubics, it follows that in the nodal case (\ref{nodal_curve}), up to isomorphism, there is only one monoid $\M_{\node}(\Cc\!P^1)$.   
\end{example}

Recall that every irreducible nodal cubic in $\Cc\!P^2$ is projectively equivalent to the cubic $\eta^2\xi = \zeta^2(\zeta+\xi)$ \cite[Exercise 5-24]{Fulton}. We formulate the main result of this section, which follows from the above discussion.

\begin{theorem}\label{nodal_monoids}
Let $\alpha$ and $\beta$ be distinct complex numbers, and let $\E$ be the singular cubic given in the affine chart $\{\xi = 1\}\subset\Cc\!P^2$ by the equation
$$\eta^2 = (\zeta - \alpha)^2(\zeta - \beta).$$
Then the addition of points on $\E$ and the involution
\begin{equation}\label{the_involution_sigma}
\begin{aligned}
\sigma: (\zeta, \eta)&\mapsto (\zeta, -\eta),\\
\infty&\mapsto\infty
\end{aligned}
\end{equation}
define a unique structure, independent of the parameters $\alpha$ and $\beta$ up to isomorphism, of a two-valued coset algebraic monoid $\M_{\node}(\Cc\!P^1)$ on $\Cc\!P^1$, with neutral element $\infty$ and operation
$$\zeta_1\ast \zeta_2 = \left[ \left( \cfrac{\sqrt{\zeta_1-\beta}\sqrt{\zeta_2-\beta}\pm(\alpha-\beta)}{\sqrt{\zeta_1-\beta}\pm\sqrt{\zeta_2-\beta}} \right)^2+\beta\right],$$
defined by the {Kontsevich polynomial} $D_{\alpha, \beta}(-z; -\zeta_1,-\zeta_2)$ from Example {\normalfont\ref{M_nodal}}. The element $\alpha$ is absorbing, i.e. $\zeta\ast\alpha = \alpha\ast \zeta = [\alpha, \alpha]$ for every $\zeta\in\Cc\!P^1\backslash\{\alpha\}$. The product $\alpha\ast\alpha$ is not defined. Moreover, the element $0$ has an inverse, namely $\inv(0) = 0$ and $0\ast 0 = \left[\alpha(1-\alpha/(4\beta)), \infty\right]$, if and only if $\alpha\neq 0$.  
\end{theorem}

\begin{prop}
{The group of doubling points for $\M_{\node}(\Cc\!P^1)$ is isomorphic to $\Z/2$ and consists of two points: $\infty$ and $\beta$.}
\end{prop}

\subsection{The Cuspidal Case}

Finally, consider the case of a triple zero:
$$\eta^2 = (\zeta - \alpha)^3.$$
Introduce a parametrization by the slope of a line $m = \eta/(\zeta-\alpha)$ passing through its cusp. Then Lemma \ref{lemma_2} immediately implies the following assertion. 

\begin{lemma}
The addition law on the cubic $\E = \{\eta^2 = (\zeta - \alpha)^3\}$ has the form
\begin{equation}\label{cusp_law}
m_3 = \frac{m_1m_2}{m_1 + m_2}.
\end{equation}
\end{lemma}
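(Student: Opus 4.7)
The plan is to follow the hint in the text and obtain \eqref{cusp_law} as the $\beta\to\alpha$ limit of Lemma \ref{lemma_2}. In the nodal formula $m_3 = (m_1 m_2 - m_+ m_-)/(m_1 + m_2)$ one has $m_+ m_- = -(\alpha - \beta)$, which vanishes in this limit; correspondingly, the parametrization $(x,y) = (m^2 + \beta,\, m(m^2 + \beta - \alpha))$ degenerates to the standard cuspidal parametrization $(x,y) = (m^2 + \alpha,\, m^3)$ of $\{y^2 = (x-\alpha)^3\}$ through its cusp. Since the chord--tangent sum is a rational function of the coefficients of the cubic and of the chord points, this passage to the limit is legitimate on the open locus where the chord meets the smooth locus transversally, and it immediately yields $m_3 = m_1 m_2/(m_1 + m_2)$.

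For a self-contained argument I would instead repeat the determinant computation of Lemma \ref{lemma_2} directly in the cuspidal coordinates. Collinearity of the points $(m_1^2+\alpha, m_1^3)$, $(m_2^2+\alpha, m_2^3)$ and $(m_3^2+\alpha, -m_3^3)$ reduces, after subtracting $\alpha$ times the third column from the first, to
\[
\det\begin{pmatrix} m_1^2 & m_1^3 & 1 \\ m_2^2 & m_2^3 & 1 \\ m_3^2 & -m_3^3 & 1 \end{pmatrix} = 0.
\]
The key step is then to factor the left-hand side; I expect (either by direct expansion, or by specializing the factorization from the proof of Lemma \ref{lemma_2} at $m_+ m_- = 0$) the identity
\[
(m_1-m_2)(m_2-m_3)(m_1-m_3)(m_1 m_2 - m_1 m_3 - m_2 m_3) = 0
\]
up to sign. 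On the generic open stratum where the $m_j$ are pairwise distinct this forces $m_1 m_2 = m_3(m_1 + m_2)$, which is precisely \eqref{cusp_law}.

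The only step that really deserves attention is the boundary behaviour, which I see as the main (and rather mild) obstacle. When $m_1 = m_2$ the formula is recovered by continuity, as in Lemma \ref{lemma_2}. The genuine degeneracy is $m_1 + m_2 = 0$: the formula then returns $m_3 = \infty$, reflecting the geometric fact that the chord joining $m_1$ and $-m_1$ passes through the cusp and therefore has no further intersection with the smooth locus. This is a feature of the cuspidal case --- the smooth locus of $\E$ is isomorphic to the additive group, whose law does not extend across the cusp --- so I would simply record it at the end of the proof rather than try to remove it.
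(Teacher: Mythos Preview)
Your proposal is correct and your first paragraph is exactly the paper's proof: the paper simply asserts that the formula follows from Lemma~\ref{lemma_2} in the limit $\beta\to\alpha$, with no further detail. Your supplementary direct determinant argument goes beyond what the paper provides; one minor correction there is that with $-m_3^3$ in the third row the Vandermonde-type factors come out as $(m_1-m_2)(m_1+m_3)(m_2+m_3)$ rather than $(m_1-m_2)(m_1-m_3)(m_2-m_3)$, but the decisive last factor $m_1 m_2 - m_1 m_3 - m_2 m_3$ is as you wrote and the conclusion is unaffected.
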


We easily obtain the following.

\begin{prop}\label{E_cong_G_m_cups_case}
The Möbius transformation $m\mapsto 1/m$ establishes an isomorphism between the $1$-valued algebraic monoids $\E(\Cc)\cong M_{\cusp}(\Cc\!P^1)$ $($see Example {\normalfont\ref{example_cusp}}$)$.
\end{prop}

\begin{example}\label{example_3}
The set $\M_{\node}(\Cc\!P^1)$ from Example \ref{M_nodal}, in the limit $\alpha\to\beta$, takes the form of the monoid $\M_{\cusp}(\Cc\!P^1)$ with identity $\infty$ and multiplication
\begin{equation}\label{cusp_law_eq}
\zeta_1\ast \zeta_2 = \left[\cfrac{(\zeta_1-\alpha)(\zeta_2-\alpha)}{(\sqrt{\zeta_1-\alpha} \pm \sqrt{\zeta_2-\alpha})^2} + \alpha\right].
\end{equation}
\end{example}

Thus, the coset monoid $\E_{\langle\sigma \rangle} = \M_{\cusp}(\Cc\!P^1)$ on $\Cc\!P^1$, constructed from the curve $\E$ and the involution $\sigma: (x, y)\mapsto (x, -y)$, in this case has identity $\infty$, absorbing element $0$, and addition law (\ref{cusp_law_eq}). 

\begin{prop}
The monoids $\M_{\cusp}(\Cc\!P^1)$ and $\M_2(\Cc\!P^1)$ are isomorphic.
\end{prop}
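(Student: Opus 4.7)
The plan is to show that both monoids arise as the same coset construction applied to the 1-valued monoid $M_{\cusp}(\Cc\!P^1)$ together with the involution $z\mapsto -z$, and hence are isomorphic in the sense of Definition \ref{isomorphisms}.

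First, I would invoke the proposition stating $\E(\Cc)\cong M_{\cusp}(\Cc\!P^1)$ via the Möbius transformation $m\mapsto 1/m$ on the slope parameter. Under this identification, the involution $\sigma\colon(x,y)\mapsto(x,-y)$, which equals $m\mapsto -m$ in the slope coordinate, corresponds to the involution $n\mapsto -n$ on $M_{\cusp}(\Cc\!P^1)$ (a verification that amounts to the elementary identity $1/(-m)=-(1/m)$). Therefore the coset monoid $\M_{\cusp}(\Cc\!P^1)=\E_{\langle\sigma\rangle}$ is isomorphic, in the sense of Definition \ref{isomorphisms}, to $M_{\cusp}(\Cc\!P^1)_{\langle -\id\rangle}$.

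Second, I would match $\M_2(\Cc\!P^1)$ with the same coset construction. By Example \ref{G_n} the group $\G_2(\Cc)=\Cc_{\langle -\id\rangle}$ is the coset of the additive group of $\Cc$ by $z\mapsto -z$, and by Theorem \ref{M_n_monoid_structure} the monoid $\M_2(\Cc\!P^1)$ extends this to $\Cc\!P^1$ with the point $\infty$ made absorbing. Since the underlying 1-valued monoid on $\Cc\!P^1$ obtained by adjoining an absorbing element at $\infty$ to $(\Cc,+)$ is precisely $M_{\cusp}(\Cc\!P^1)$ (see Example \ref{example_cusp}), I obtain $\M_2(\Cc\!P^1)=M_{\cusp}(\Cc\!P^1)_{\langle -\id\rangle}$. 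Combining the two identifications yields $\M_{\cusp}(\Cc\!P^1)\cong \M_2(\Cc\!P^1)$.

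As an explicit sanity check, I would verify the addition laws directly. After normalizing to $\alpha=0$ via the automorphism $x\mapsto x-\alpha$ of $\Cc\!P^1$, the law of Example \ref{example_3} becomes $x_1\ast x_2=[\,x_1 x_2/(\sqrt{x_1}\pm\sqrt{x_2})^2\,]$, and the Möbius involution $x\mapsto 1/x$ (which swaps the neutral $\infty$ with the absorbing $0$) transforms this into $[(\sqrt{u_1}\pm\sqrt{u_2})^2]$, precisely the $\M_2(\Cc\!P^1)$ law of Theorem \ref{M_n_monoid_structure} for $n=2$. The main obstacle is the bookkeeping across several coordinate changes (slope parametrization, reciprocal, translation by $\alpha$) and checking that the cube diagram \eqref{cubic_diagram} commutes throughout; but this ultimately reduces to the single intertwining relation $1/(-m)=-(1/m)$ between the two involutions.
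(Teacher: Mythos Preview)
Your proposal is correct. The paper's own proof is a single line: it simply exhibits the explicit M\"obius transformation $x\mapsto 1/(x-\alpha)$ and asserts it is an isomorphism $\M_{\cusp}(\Cc\!P^1)\to\M_2(\Cc\!P^1)$. Your ``sanity check'' paragraph is exactly this: the translation $x\mapsto x-\alpha$ followed by $x\mapsto 1/x$ is the map $x\mapsto 1/(x-\alpha)$, and your verification that it carries the law $x_1x_2/(\sqrt{x_1}\pm\sqrt{x_2})^2$ to $(\sqrt{u_1}\pm\sqrt{u_2})^2$ is the computation the paper leaves implicit.

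Your main argument, via identifying both sides with the coset monoid $M_{\cusp}(\Cc\!P^1)_{\langle -\id\rangle}$, is a genuinely different and more conceptual route. It explains \emph{why} the isomorphism exists rather than just exhibiting it, and it makes transparent the role of the slope parametrization and of Proposition \ref{E_cong_G_m}. The cost is more bookkeeping across coordinate changes; the benefit is that the argument generalizes cleanly (indeed, the same pattern underlies items \textbf{(ii)}--\textbf{(iv)} of Theorem \ref{cusp_monoids}). The paper's approach is shorter but opaque.
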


\begin{proof}
The map
$$x\mapsto \frac{1}{x - \alpha}$$
defines an isomorphism $\M_{\cusp}(\Cc\!P^1)\to\M_2(\Cc\!P^1)$.  
\end{proof}

Recall that every irreducible cuspidal cubic in $\Cc\!P^2$ is projectively equivalent to the cubic $\{y^2z = x^3\}$ \cite[Exercise 5-24]{Fulton}. 

\begin{theorem}\label{cusp_monoids}
Let $\alpha\in\Cc$ and let $\E$ be the singular cubic given in the affine chart $\{z = 1\}\subset\Cc\!P^2$ by the equation
$$y^2 = (x - \alpha)^3.$$
Then the following assertions hold:  
\begin{enumerate}[\bf (i)]
\item The addition of points on $\E$ and the involution \eqref{the_involution_sigma} define on $\Cc\!P^1$ the unique structure, independent of the parameter $\alpha$ up to {coset} isomorphism, of a two-valued coset algebraic monoid $\M_2(\Cc\!P^1)$. 

\item The group from Theorem {\normalfont\ref{3-valued_group}}, in the limit $c\to 0$, becomes the monoid $\M_3(\Cc\!P^1)$. 

\item The group from Theorem {\normalfont\ref{4-valued_group}}, in the limit $b\to 0$, becomes the monoid $\M_4(\Cc\!P^1)$. 

\item The group from Theorem {\normalfont\ref{6-valued_group}}, in the limit $c\to 0$, becomes the monoid $\M_6(\Cc\!P^1)$. 
\end{enumerate}
\end{theorem}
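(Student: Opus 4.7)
For \textbf{(i)}, by Example \ref{example_3} the coset monoid $\E_{\langle\sigma\rangle}$ built from the cuspidal cubic $\{y^2=(x-\alpha)^3\}$ and the involution $\sigma$ from \eqref{the_invloution_sigma} is exactly $\M_{\cusp}(\Cc\!P^1)$, and the proposition immediately preceding the theorem identifies $\M_{\cusp}(\Cc\!P^1)$ with $\M_2(\Cc\!P^1)$ via the change of coordinate $x\mapsto 1/(x-\alpha)$; projective equivalence of all irreducible cuspidal cubics in $\Cc\!P^2$ gives independence from $\alpha$.

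For \textbf{(ii)}--\textbf{(iv)} the plan is uniform. In each of Theorems \ref{3-valued_group}, \ref{4-valued_group}, \ref{6-valued_group} the automorphism $\phi_n$ is given by a linear formula in $(x,y)$ that does not involve the deformation parameter, and a direct substitution shows that each of $\phi_3(x,y)=(\epsilon x,y)$, $\phi_4(x,y)=(-x,iy)$, $\phi_6(x,y)=(\epsilon x,-y)$ also preserves the cuspidal limit $\E_0=\{y^2=x^3\}$. Since the Weierstrass addition and the coset construction are given by universal polynomial formulas, the coefficients of $p_{n,\eqh}(z;x,y)$ and $p_{n,\har}(z;x,y)$ depend polynomially on the parameter and converge, as $c\to 0$ (resp. $b\to 0$), to the polynomial defining the coset $n$-valued multiplication on $\E_0$ with respect to $\phi_n$. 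It then remains to identify $\E_{0,\langle\phi_n\rangle}$ with $\M_n(\Cc\!P^1)$.

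To carry out this identification, parametrize the smooth locus of $\E_0$ by the slope $m=y/x$, so that $(x,y)=(m^2,m^3)$; the lemma at the start of the cuspidal subsection yields $m_3=m_1m_2/(m_1+m_2)$, which under the further change $n=1/m$ (the cuspidal form of Proposition \ref{E_cong_G_m}) becomes ordinary addition $n_3=n_1+n_2$. Thereby the 1-valued monoid structure on $\E_0$ (with the cusp as absorbing element) is identified with $(\Cc,+)\cup\{\infty\}$. A direct computation shows that $\phi_3$, $\phi_4$, $\phi_6$ act on $n$ as multiplication by $\epsilon$, $i$, $-\epsilon^{-1}$ respectively, and these are primitive cube, fourth, and sixth roots of unity. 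The coset construction applied to $(\Cc,+)$ with respect to the resulting cyclic subgroup of $\Aut(\Cc)$ is by definition $\G_n(\Cc)$ of Example \ref{G_n}, whose extension to $\Cc\!P^1$ with $\infty$ absorbing is precisely $\M_n(\Cc\!P^1)$ by Theorem \ref{M_n_monoid_structure}.

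The main obstacle is the limit step: one must verify that the coefficient-wise limit of $p_{n,\eqh}(z;x,y)$ and $p_{n,\har}(z;x,y)$ really does encode the coset $n$-valued law on the singular curve $\E_0$ on all of $\Cc\!P^1\times\Cc\!P^1$, rather than only over a Zariski-dense open. This is delicate because a flat family of groups can degenerate to non-equivalent singular structures depending on the chosen compactification; what saves us here is that both the Weierstrass group law and the coset quotient by $\phi_n$ are defined by universal polynomial expressions in the deformation parameter, so that the limit law is automatically algebraic and, by the bijective identification of smooth loci and absorbing orbits carried out above, must coincide with the coset law on $\E_0$.
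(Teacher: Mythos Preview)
Your argument is correct. For part \textbf{(i)} it coincides with the paper's treatment: the paper assembles exactly the same ingredients (Example~\ref{example_3}, the proposition identifying $\M_{\cusp}(\Cc\!P^1)$ with $\M_2(\Cc\!P^1)$ via $x\mapsto 1/(x-\alpha)$, and the projective equivalence of irreducible cuspidal cubics) in the text immediately preceding the theorem, and then states Theorem~\ref{cusp_monoids} without a separate proof.

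For parts \textbf{(ii)}--\textbf{(iv)} the paper gives no explicit proof; the evident intended argument is the one-line observation that setting $c=0$ (resp.\ $b=0$) in the explicit formulas for $p_{3,\eqh}$, $p_{4,\har}$, $p_{6,\eqh}$ displayed in Theorems~\ref{3-valued_group}, \ref{4-valued_group}, \ref{6-valued_group} leaves precisely the polynomials $p_n$ of Example~\ref{G_n} (with the sign convention $(-1)^n$). Your route is genuinely different and more structural: rather than comparing explicit polynomials, you identify the degenerate coset monoid $(\E_0)_{\langle\phi_n\rangle}$ with $\M_n(\Cc\!P^1)$ directly, by passing through the slope parametrization $(x,y)=(m^2,m^3)$ and the inversion $m\mapsto 1/m$ to reduce to $(\Cc,+)$ with a primitive-root automorphism. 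This has the advantage of explaining \emph{why} the degeneration lands on $\M_n(\Cc\!P^1)$ and would work even if the polynomials $p_{n,\eqh}$, $p_{n,\har}$ had not been written out; the paper's implicit approach is shorter but purely computational. Your concern about the ``main obstacle'' in the limit step is more cautious than necessary here: since the coefficients of $p_{n,\eqh}$ and $p_{n,\har}$ are explicitly polynomial in the deformation parameter, the coefficient-wise limit is automatic, and your identification of the limiting coset structure is all that is required.

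One minor slip: with $n=x/y$ one finds that $\phi_6$ acts on $n$ by multiplication by $-\epsilon$ rather than $-\epsilon^{-1}$; both are primitive sixth roots of unity, so the conclusion is unaffected.
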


\begin{prop}
{The group of doubling points for $\M_{\cusp}(\Cc\!P^1)$ is trivial.}
\end{prop}

\section{Projective Duality and the Family of Monoids $\M_n(\Cc\!P^1)$}\label{projective_duality_and_the_family_of_monoids}

Consider the curve
\[
X_n=\{\,p_n(z; x, y)=0\,\}\subset \Cc\!P^2,
\]
where $p_n(z;x,y)$ is the polynomial \eqref{p_n} from Example \ref{G_n}.

\begin{prop}\label{prop_1}
The projectively dual curve to $X_n$ has the form
\[
X_n^\vee=\{\,P_{n-1}(w;u,v)=0\,\}\subset (\Cc\!P^2)^\ast,
\]
where
$$P_{n-1}(w;u,v)=(uvw)^{n-1}\,p_{n-1}(w^{-1};u^{-1},v^{-1}).$$
\end{prop}

\begin{proof}
It is not difficult to check that the curve $X_n\subset \Cc\!P^2$ admits the following rational parametrization:
\begin{equation}\label{X_n}
(x,y,z)=\bigl((-s-t)^n,\ (-t)^n,\ s^n\bigr).
\end{equation}
The chart $\{w=1\}$, dual to the chart $\{z=1\}$ in $(\Cc\!P^2)^\ast$, consists of the lines $\{ux+vy+1=0\}$, each of which is determined by a pair $(u,v)$. By the definition of projective duality (see, for example, \cite[Chapter 1, Section 1, Subsection B]{Gelfand}), for any curve $X=(x(t),y(t))$ in the chart $\{z=1\}$, its caustic $X^\vee$ in the chart $\{w=1\}$ has a parametric representation $(u(t),v(t))$ such that the equation of the tangent line to $X$ at the point $(x(t),y(t))$ has the form
\begin{equation}\label{straight_line}
u(t)x+v(t)y+1=0.
\end{equation}
Therefore,
\begin{align}\label{dual_parametrization}
(u(t),v(t))&=\left(\frac{y'(t)}{R(t)},\ \frac{-x'(t)}{R(t)}\right),\\
R(t)&=x'(t)\,y(t)-x(t)\,y'(t).\nonumber
\end{align}
Substituting \eqref{X_n} into \eqref{dual_parametrization}, we obtain, in the chart $\{w=1\}$, the following parametric equation for $X_n^\vee$:
\begin{equation}\label{dual_parametric_X_n}
(u,v)=\bigl((-1-t)^{1-n},\ t^{1-n}\bigr).
\end{equation}
Since for odd $n$ the sets of values of the multivalued functions $u^{\frac{1}{1-n}}$ and $-u^{\frac{1}{1-n}}$ coincide, whereas for even $n$ we have $(-u)^{\frac{1}{1-n}}=-u^{\frac{1}{1-n}}$ for a suitable branch of the root, eliminating $t$ from \eqref{dual_parametric_X_n} gives
\begin{equation}\label{resulting_parametrization}
u^{\frac{1}{1-n}}+v^{\frac{1}{1-n}}=(-1)^{n-1}.
\end{equation}

Put $m=n-1$. Consider the algebraic element
$$\theta_1=\bigl(u^{-\frac{1}{m}}+v^{-\frac{1}{m}}\bigr)^{-m}$$
in the extension
\[
\Q(u,v)\subset \Q(\sqrt[m]{u},\sqrt[m]{v}).
\]
Then the minimal polynomial of the element $\theta_1(u,v)$ is $(uvw)^m\,p_m(w^{-1};u^{-1},v^{-1})$, since the minimal polynomial of the element
$$\theta_2=\bigl(u^{\frac{1}{m}}+v^{\frac{1}{m}}\bigr)^m$$
is $p_m(w;u,v)$ according to the Lemma \ref{trans_extension} below. Since \eqref{resulting_parametrization} defines the curve $X^\vee$ in the chart $\{w=1\}$, we obtain the desired $X_n$-discriminant
$$\Delta_{X_n}=(uvw)^m\,p_m(w^{-1};u^{-1},v^{-1}).$$
\end{proof}

\begin{lemma}\label{trans_extension}
Let $x_1, ..., x_m$ be algebraically independent variables and let $n_1, ..., n_m$ be positive integers. Then the polynomial
\begin{equation}\label{sum_of_Frobenius_cells}
q(\{n_i\}, \{x_j\}; z) =
\chi\bigl(F(\underset{n_1}{\underbrace{0,...,0, -x_1}})\boxplus...\boxplus F(\underset{n_m}{\underbrace{0,...,0, -x_m}}); z\bigr)
\end{equation}
in the variables $z, x_1, ..., x_m$ is irreducible over the field $\Cc$, where
$F(\underset{n_j}{\underbrace{0,...,0, -x_j}})$ is the Frobenius companion matrix of the polynomial $t^{n_j} -x_j$, $\chi(M;z)$ denotes the characteristic polynomial $($in the variable $z)$ of a matrix $M$, $A\boxplus B := A\otimes I_n + I_m\otimes B\in\Mat_{mn\times mn}$ denotes the Kronecker sum of matrices $A\in \Mat_{m\times m}$, $B\in \Mat_{n\times n}$, and $I_n$ is the identity $n\times n$ matrix. As a corollary, the polynomials $p_n(z; x_1, ..., x_m)$ in the variables $z, x_1, ..., x_m$ are irreducible over $\Cc$ for all positive integers $n$.
\end{lemma}

\begin{proof}
Introduce the following algebraic extension $K/L$ of the field $L=\Cc(x_1,\dots,x_m)$ and the element $\theta$:
$$
K=L(\alpha_1,\dots,\alpha_m),
\qquad
\alpha_j^{\,n_j}=x_j,
\qquad
\theta=\alpha_1+\cdots+\alpha_m.
$$
We first show that $[K:L]=n_1... n_m$. 

Let $K_{j-1}:=L(\alpha_1,\dots,\alpha_{j-1})$ for each $j = 1, ..., m$. Since $x_i=\alpha_i^{n_i}$ for $i<j$, we have
$$K_{j-1}=\Cc(\alpha_1,\dots,\alpha_{j-1},x_j,\dots,x_m).$$
The algebraic independence of the elements $\alpha_1,\dots,\alpha_{j-1},x_j,\dots,x_m$ implies that
$$R_{j-1}:=\Cc[\alpha_1,\dots,\alpha_{j-1},x_j,\dots,x_m]$$
is a polynomial ring. 

We show that for each $j = 1, ..., m$ the polynomial $P_j(t) = t^{n_j}-x_j$ in the ring $\mathrm{Frac}(R_{j-1})[t] = K_{j-1}[t]$ is irreducible by Eisenstein's criterion; see \cite[Chapter V, \S 7]{Lang}. To this end, note that $R_{j-1}$ is a unique factorization domain and that $x_j$ is a prime element in it. The element $x_j$ clearly does not divide the leading coefficient of $P_j(t)$, but it divides all its other coefficients. At the same time, $x_j^2$ does not divide the constant term of $P_j(t)$, since $R_{j-1}$ is an integral domain. Thus, by Eisenstein's criterion, the polynomial $P_j(t)$ is irreducible, and therefore $[K_j:K_{j-1}] = n_j$. 

By the tower theorem for field extensions, it follows that $[K:L] = n_1...n_m$. Hence the monomials
$$\alpha_1^{s_1}\cdots \alpha_m^{s_m},\qquad 0\le s_j<n_j,$$
form a basis of the extension $K$ over the field $L$.    

We now show that the element $\theta$ is primitive. 

For every tuple of roots of unity $(\zeta_1,\dots,\zeta_m)$, $\zeta_j^{n_j}=1$, there exists a conjugation automorphism $\sigma_{\zeta}$ of the field $K$ over the field $L$, given by
$$\sigma_{\zeta}(\theta)=\zeta_1\alpha_1+\cdots+\zeta_m\alpha_m.$$
These conjugation automorphisms $\sigma_\zeta$ are pairwise distinct. Indeed, if $\sigma_{\zeta}(\theta)=\sigma_{\eta}(\theta)$ for some distinct $\zeta$ and $\eta$, then
$$\sum_{j=1}^m (\zeta_j-\eta_j)\alpha_j=0.$$
Then there would exist an index $j_0$ such that $\alpha_{j_0}$ is algebraic over
$$\Cc(x_1,\dots,\widehat{x_{j_0}},\dots,x_m).$$
It would follow that the element $x_{j_0}=\alpha_{j_0}^{n_{j_0}}$ is algebraic over
$$\Cc(x_1,\dots,\widehat{x_{j_0}},\dots,x_m),$$
contradicting the algebraic independence of the elements $x_1, ..., x_m$. Thus, the conjugation automorphisms $\sigma_\zeta$ are pairwise distinct.

It follows from the above that the element $\theta\in K$ has at least $n_1...n_m$ pairwise distinct conjugates $\sigma_\zeta(\theta)$. Since $[K:L] = n_1...n_m$, we have $L(\theta) = K$, i.e. $\theta$ is a primitive element of the extension $K/L$.  

The operator of multiplication by $\theta$ on the $L$-vector space $K$, in the basis
$$\{\,\alpha_1^{s_1}\cdots \alpha_m^{s_m}\mid 0\le s_j<n_j\,\},$$
has matrix
$$F(0,\dots,0,x_1)\,\boxplus\,\cdots\,\boxplus\,F(0,\dots,0,x_m).$$
Consequently, its characteristic polynomial is exactly $q(\{n_i\},$ $\{x_j\};z)$. Since $K=L(\theta)$, the same operator in the basis
$$1,\theta,\dots,\theta^{n_1...n_m-1}$$
is represented by the Frobenius companion matrix of the minimal polynomial of the element $\theta$ over $L$. Hence
$$q(\{n_i\},\{x_j\};z)=m_{\theta,L}(z),$$
and therefore the polynomial $q$ is irreducible over $L$.  

Finally, since the polynomial $q$ has leading coefficient $1$ with respect to $z$, any nontrivial factorization of it in the ring $\Cc[x_1,\dots,x_m,z]$ would remain nontrivial in the ring $L[z]$, which is impossible. Hence $q$ is irreducible in $\Cc[x_1,\dots,x_m,z]$ as well.

We know that the polynomial $p_n(z^n; x_1, ..., x_m)$ in the variable $z$ is irreducible over $\Cc$ for each $n\geq1$. This immediately implies the irreducibility of the polynomial $p_n(z; x_1, ..., x_m)$ over $\Cc$.

\end{proof}

\begin{example}\label{X_2}
For $X_2$ we have
$$(u,v)=\left(-\frac{1}{1+t},\ \frac{1}{t}\right)\quad\text{or}\quad \frac{1}{u}+\frac{1}{v}=-1.$$
Taking the projective closure, that is, homogenizing, we obtain that $X^\vee$ is defined by the equation
$$P_1=uvw\,p_1(w^{-1};u^{-1},v^{-1})=(u+v)w+uv=0.$$
\end{example}

\begin{example}
For $X_3$:
$$(u,v)=\left(\frac{1}{(1+t)^2},\ \frac{1}{t^2}\right)\quad\text{or}\quad \frac{1}{\sqrt{u}}+\frac{1}{\sqrt{v}}=1.$$
The curve $X_3^\vee$ is defined by the equation
$$P_2=(uvw)^2\,p_2(w^{-1};u^{-1},v^{-1})=(uv-w(u+v))^2-4uvw^2=0,$$
that is,
$$P_2=(uv)^2+(vw)^2+(uw)^2-2u^2vw-2uv^2w-2uvw^2.$$
\end{example}

In \cite[Chapter 1, Example 2.3]{Gelfand}, it is shown for the family of Fermat curves, with integers $n\ge 2$,
\begin{equation}\label{Fermat_curve}
F_n=\{\,x^n+y^n=z^n\,\},
\end{equation}
that for a given $n$ the dual curve, in the chart $\{w=1\}$, is defined by the equation
$$F_n^\vee=\left\{\,u^{\frac{n}{n-1}}+v^{\frac{n}{n-1}}=1\,\right\}.$$
For $n=3$, this gives the explicit form
$$u^6+v^6+w^6-2u^3v^3-2u^3w^3-2v^3w^3=0.$$

From the proof of Proposition \ref{prop_1} we obtain:

\begin{theorem}\label{Fermat_curve_theorem}
Let $F_n$ be the Fermat curve \eqref{Fermat_curve}, $n\ge 2$. Then the curve dual to it is defined by the equation $\{F_n^\vee(u,v,w)=0\}$, where
$$F_n^\vee(u,v,w)=p_{n-1}(w^n;u^n,v^n).$$
\end{theorem}

We note that these polynomials can be realized as determinants of generalized Wendt matrices \cite[Theorem 4]{BK}.

We now show that projective duality between $X_n$ and $X_n^\vee$ defines a shift operator in a certain family of $n$-valued algebraic monoids $\M_n(\Cc\!P^1)$.

\begin{theorem}\label{M_n_monoid_structure}
The group structure $\G_n(\Cc)$ extends only to the structure of an algebraic $n$-valued coset monoid $\M_n(\Cc\!P^1)$ on $\Cc\!P^1$. Here the point $\infty$ is absorbing, i.e.
$$\infty\ast x=x\ast \infty=[\infty,\infty,\dots,\infty]$$
for every $x\in \Cc\!P^1\backslash\{\infty\}$, while the value $\infty\ast \infty$ is not defined. In homogeneous coordinates, the multiplication
\[
\ast:\ \Cc\!P^1\times \Cc\!P^1\longrightarrow \Cc\!P^n
\]
is given by the formula
\begin{equation}\label{the_law}
(x_1:x_0)\ast (y_1:y_0)=(b_n:b_1:\dots:b_0),
\end{equation}
where $b_j=b_j(x,y)$ is the coefficient of $z_1^{n-j} z_0^{\,j}$ in the homogeneous polynomial
$$(x_0y_0 z_0)^n\ p_n\!\left(\frac{z_1}{z_0};\ (-1)^n\frac{x_1}{x_0},\ (-1)^n\frac{y_1}{y_0}\right)$$
whenever $(x_1:x_0)$ and $(y_1:y_0)$ are not both equal to $(1:0)$.
\end{theorem}

\begin{proof}
Consider the $n$-valued law $p_n(z;(-1)^n x,(-1)^n y)$ on $\Cc$ as the expression of the desired law on $\Cc\!P^1$ in the chart $z=1$:
\[
\begin{aligned}
\mu:\ \Cc\!P^1\times \Cc\!P^1&\to \Sym^n(\Cc\!P^1)\\
(x,y)=((x_1:x_0),(y_1:y_0))&\mapsto [(w_1:1),\dots,(w_n:1)],
\end{aligned}
\]
where $w_1,\dots,w_n$ are the roots, in the variable $z$, of the polynomial $p(z;x,y)$. We identify $\Sym^n(\Cc\!P^1)$ with $\Cc\!P^n$ by means of the isomorphism
\begin{align}\label{identification}
\phi:\ \Sym^n(\Cc\!P^1)&\longrightarrow \Cc\!P^n\cong G(1,n,2)\\
\nonumber
\bm{u}=[(u_{11}\!:\!u_{10}),\dots,(u_{n1}\!:\!u_{n0})]&\longmapsto
\bigl(z_1u_{10}-z_0u_{11}\bigr)\cdots \bigl(z_1u_{n0}-z_0u_{n1}\bigr)=\phi(\bm{u})(z_1:z_0),
\end{align}
under which a point $\bm{u}$ of the symmetric power is sent to the homogeneous form $\phi(\bm{u})(z_1:z_0)$, the product of $n$ linear forms
$$\ell_j(z_1:z_0)=z_1u_{j1}-z_0u_{j0},$$
that is, to a point of the Chow variety $G(1,n,2)$. Then, by Vieta's formulas, the composition $\phi\circ \mu$ gives the desired law \eqref{the_law}.

It is easy to see that
$$b_n=(x_1y_0+(-1)^{n+1}x_0y_1)^n,$$
each $b_j$ is divisible by $(x_0y_0)^j$ for $j=1,\dots,n$, and $b_0=(x_0y_0)^n$. Hence the multiplication \eqref{the_law} is defined for all pairs $(x,y)\in \Cc\!P^1\times \Cc\!P^1$ except for $(\infty,\infty)=((1:0),(1:0))$. Moreover, the element $\infty$ is not invertible.

The associativity of the resulting operation is obvious.
\end{proof}

\begin{theorem}\label{shift_duality_monoids}
Under projective duality, the curve $X_n$ $(n\ge 2)$ is sent to
\[
X_n^\vee=\{\, (uvw)^{n-1}\, p_{n-1}(1/w;1/u,1/v)=0\,\}\subset (\Cc\!P^2)^{\ast}.
\]
The composition of the duality $X_n\mapsto X_n^\vee$ with the subsequent Möbius transformation $(u,v,w)\mapsto (1/u,1/v,1/w)$ defines the shift operation $\M_n(\Cc\!P^1)\mapsto \M_{n-1}(\Cc\!P^1)$ in the family of algebraic $n$-valued monoids.
\end{theorem}

\begin{proof}
This follows from Proposition \ref{prop_1}.
\end{proof}

The following fact was first obtained in \cite[Theorem 2.3]{Gaiur}. A direct proof was given in the recent paper \cite{BK}. We give yet another proof using the theory of projective duality, which clarifies the nature of this result.

\begin{prop}[\cite{Gaiur}]
The discriminant $\disc_t(P)$ of the polynomial
$$P(t)=(z t^{n-1}+y)(1+t)^{n-1}+(-1)^{n-1}x t^{n-1}$$
with respect to the variable $t$, which is a polynomial of degree $4n-6$, is related to $p_n(z;x,y)$ by
$$(-1)^n (n-1)^{2n-2} (xyz)^{n-2}\, p_n(z;x,y)=\disc_t(P)$$
for every $n\ge 2$.
\end{prop}

\begin{proof}
Consider the curve $X_n^\vee$. We already know that it is parametrized by formula \eqref{dual_parametric_X_n}. Then, by the definition of the $X_n^\vee$-discriminant, the curve $X_n^{\vee\vee}$ is an irreducible component of the discriminant of the polynomial obtained by restricting the line \eqref{straight_line} to $X_n^\vee$. That is, in the chart $\{w=1\}$ the curve $X_n^{\vee\vee}$ is the discriminant, with respect to $t$, of the equation
\begin{equation}\label{polynomial_for_discriminant}
\frac{1}{(-1-t)^{n-1}}\cdot x+\frac{1}{t^{n-1}}\cdot y+1=0.
\end{equation}
Taking the projective closure of the polynomial on the left-hand side of \eqref{polynomial_for_discriminant}, we obtain $p_n(z;x,y)$ up to a constant factor. It is easy to see that if $xyz=0$, then for $n\ge 2$ the polynomial $P(t)$ has a multiple root; hence $\disc_t(P)=0$. This means that $\disc_t(P)$ is divisible by some power of the monomial $xyz$. By \cite[Theorem 2.2]{Gaiur}, $\disc_t(P)$ has no other singular components. The required assertion now follows by comparing degrees.
\end{proof}

In connection with Bessel kernels for solutions of Picard--Fuchs differential equations corresponding to the kernel
$$K_n=\sum_{j,k}\binom{\,j+k\,}{k}\frac{x^{\,j} y^k}{z^{\,j+k}},$$
the paper \cite{Gaiur} considered the iterated analogue of the polynomials $p_n(z;x,y)$:
\begin{equation}\label{p_nm_polynomial}
p_{n,m}(z;\bm{x})=\prod_{k_1,\dots,k_m=1}^{n}\bigl(\sqrt[n]{z}+\varepsilon^{k_1}\sqrt[n]{x_1}+\dots+\varepsilon^{k_m}\sqrt[n]{x_m}\bigr).
\end{equation}

The polynomial $p_{n, m}(z; x)$ defines an $m$-ary $n^{m-1}$-valued algebraic operation
$$\mu(x_1, ..., x_m) = [z\mid p_{n, m}(z; x) = 0].$$
Denote by $\Oo_{n, m}(\Cc\!P^1)$ the variety $\Cc\!P^1$ equipped with the operation $\mu$.

Let
$$X_{n,m}=\{\,p_{n,m}=0\,\}$$
be a hypersurface in $\Cc\!P^m$. For integers $n\ge 2$ and $m\ge 2$, put
$$P_{n,m}=(u_1\cdots u_m w)^{n-1}\, p_{n-1, m}(w^{-1};u_1^{-1},\dots,u_m^{-1}).$$
By the same method as in Theorem \ref{shift_duality_monoids}, we obtain the following.

\begin{theorem}\label{theorem_3}
The composition of duality, for $m\ge 2$ and $n\ge 2$,
$$X_{n,m}\mapsto X_{n,m}^\vee=\{\,P_{n,m}=0\,\}\subset (\Cc\!P^m)^{\ast},$$
with the subsequent Möbius transformation
$$(u_1,\dots,u_m,w)\mapsto (1/u_1,\dots,1/u_m,1/w)$$
defines the shift operation
$$\Oo_{n,m}(\Cc\!P^1)\mapsto \Oo_{n-1,m}(\Cc\!P^1)$$
in the family of $m$-ary $n^{m-1}$-valued algebraic structures $\Oo_{n,m}(\Cc\!P^1)$.
\end{theorem}

This result clarifies the assertion of \cite[Theorem 3.2]{Gaiur} concerning the relation between the polynomial $p_{n,m}(z;\bm{x})$ and the discriminant of the homogeneous polynomial
$$P(\bm{u})=(u_1\cdots u_m)^{n-1}\left(z+(-1)^n\left(\sum_{j=1}^m u_j\right)^{n-1}\cdot \sum_{j=1}^m \frac{x_j}{u_j^{\,n-1}}\right),$$
taken with respect to the variables $u_1,\dots,u_m$ in the sense of \cite[Chapter 13]{Gelfand}.

The observation from Theorem \ref{Fermat_curve_theorem} has an iterated analogue. 

\begin{theorem}\label{F_nm_hypersurfaces}
Let $F_{n,m}$ be the Fermat hypersurface
$$F_{n,m} = \{\,x_1^n + x_2^n + ... + x_m^n = z^n\,\}$$
in $\Cc\!P^m$ with coordinates $x_1, ..., x_m, z$. The dual hypersurface is defined by the equation
\begin{equation}\label{F_n_vee}
F_{n, m}^{\vee} = \{\,p_{n-1, m}(w^n; u_1^n, ..., u_m^n) = 0\,\}
\end{equation}
in $(\Cc\!P^m)^{\ast}$ with dual coordinates $u_1, ..., u_m, w$, where $p_{n,m}$ denotes the polynomial \eqref{p_nm_polynomial}.   
\end{theorem}

In \cite[Example 4.16]{Gelfand}, it was observed that the dual hypersurface can be defined by the equation
$$u_1^{\frac{n}{n-1}} + ... + u_m^{\frac{n}{n-1}} = z^{\frac{n}{n-1}}$$
and that this irrational equation can be replaced by a polynomial equation of degree $n(n-1)^{m-1}$. Theorem \ref{F_nm_hypersurfaces} clarifies this observation by giving a concrete realization of \eqref{F_n_vee} as a polynomial equation. A determinantal expression for \eqref{F_n_vee} in the case $n = 2$ and $m = 3$ can be found in \cite[Example 9]{BK}.  

Fermat hypersurfaces play an important role in various problems of algebraic topology and algebraic geometry. Their topology has been studied in various works. For example, each Fermat hypersurface $F_{2, m}$ is diffeomorphic to the homogeneous space $SO(m+1)/(SO(2)\times SO(m-1))$ of oriented planes in $\R^{m + 1}$ \cite[Chapter XI, Example 10.6]{Kobayashi_2}.

\section
[The Laws $\texorpdfstring{\bm{p}_n}{p_n}$ and Discriminants of Field Extensions]
{The Laws $\bm{p}_n$ and Discriminants of Field Extensions}
\label{p_n_laws_and_field_extension_discriminants}

To formulate the following proposition, we need a definition first introduced by Dedekind for algebraic number fields \cite[p.~429]{Dedekind}. We give the general version, following \cite[Lecture 12, Definition 12.5]{number_theory_lectures}:

\begin{defi}\label{discriminant_for_field_extensions}
Let $R$ be a commutative ring with identity, and let $R\subset S$ be a finite extension such that $S$ is a free $R$-module. For any elements $e_1,\dots,e_n\in S$, their {\it discriminant} is defined by the formula
\[
\Delta(e_1,\dots,e_n)=\det\bigl(\Tr_{S/R}(e_ie_j)\bigr)_{ij}\,,
\]
where $\Tr(-)$ denotes the trace of the $R$-linear map $S\to S$ given by multiplication by $e_ie_j$.
\end{defi}

In the case of interest to us, Definition \ref{discriminant_for_field_extensions} reduces to the classical definition of the discriminant of a polynomial.

\begin{lemma}[(Lecture 12, Proposition 12.6 \cite{number_theory_lectures})]
Let $K\subset L$ be a finite separable extension of degree $n$, let $\Omega$ be the normal closure of $L$ over $K$, and let $\sigma_1,\dots,\sigma_n$ be the distinct embeddings $L\hookrightarrow \Omega$ over $K$. Then:
\begin{enumerate}[\bf (i)]
\item For any elements $e_1,\dots,e_n\in L$ we have
$$\Delta(e_1,\dots,e_n)=\det(\sigma_i(e_j))_{ij}^{2}.$$
\item For any $x\in L$ we have
$$\Delta(1,x,\dots,x^{n-1})=\prod_{i<j}\bigl(\sigma_i(x)-\sigma_j(x)\bigr)^{2}.$$
\end{enumerate}
\end{lemma}

Under a change of basis $\bm{e}' = \bm{e}C$, $C\in \Mat_K(n)$, the discriminant changes according to the formula
$$\Delta_{L/K}(\bm e')=\det(C)^{2}\,\Delta_{L/K}(\bm e).$$
In the case where $K$ is the field of fractions of a Dedekind domain $A$, $L/K$ is a finite separable extension, and $B$ is the integral closure of $A$ in $L$, this allows one to define the discriminant $\Delta_{L/K}$ of the extension $L/K$ as the fractional ideal generated by the set
$$\{\,\Delta(\bm e)\mid \text{$\bm e$ is an $A$-basis of the $A$-module $B$}\,\}.$$
In our case, the ring $\Q[x,y]$ is not a Dedekind domain.

\begin{prop}\label{p_n_as_field_extension_discriminant}
For every integer $n\ge 2$, the discriminant of the polynomial $p_n(z;$ $x, y)$ with respect to the variable $z$ coincides with the discriminant $\Delta(1,\theta,\dots,\theta^{n-1})$ for the extension $\Q(x,y)\subset \Q(\theta)$, where $\theta=(\sqrt[n]{x}+\sqrt[n]{y})^{n}$.
\end{prop}

\begin{proof}
Indeed, as already noted, $p_n(z; x, y)$ is the minimal polynomial of the element $\theta=(\sqrt[n]{x}+\sqrt[n]{y})^{n}$.
\end{proof}

\newpage

\cleardoublepage

\phantomsection
\addcontentsline{toc}{section}{References}
\bibliographystyle{mystyle}
\bibliography{data}

@book{Dedekind,
	address = {Braunschweig},
	author = {Dirichlet, P.G.L.},
	date-added = {2026-05-06 15:19:06 +0300},
	date-modified = {2026-05-06 15:23:28 +0300},
	edition = {{Umgearbeitete und vermehrte Auflage ({Herausgegeben und mit Zus{\"a}tzen versehen von R. Dedekind})}},
	publisher = {Friedrich Vieweg und Sohn},
	title = {Vorlesungen {\"u}ber Zahlentheorie},
	year = {1871}}

@article{Kontsevich_type_polynomials,
	author = {Buchstaber, V. and Gaiur, I. and Rubtsov, V.},
	date-added = {2026-05-06 15:12:19 +0300},
	date-modified = {2026-05-06 15:14:59 +0300},
	journal = {Izv. Math.},
	number = {1},
	pages = {34--69},
	title = {{Algebraic 2-valued group structures on $\mathbb{P}^1$ Kontsevich-type polynomials, and multiplication formulas. I}},
	volume = {90},
	year = {2026}}

@article{Bunkova_Buchstaber11,
	author = {Buchstaber, V. M. and Bunkova, E. Yu.kova},
	date-added = {2026-05-05 17:13:04 +0300},
	date-modified = {2026-05-05 17:14:59 +0300},
	eprint = {https://doi.org/10.4213/faa3037},
	journal = {Functional Analysis and Its Applications},
	number = {2},
	pages = {99--116},
	title = {{Krichever Formal Groups}},
	volume = {45},
	year = {2011}}

@book{Kobayashi_2,
	author = {Kobayashi, S. and Nomizu, K.},
	date-added = {2025-10-15 19:02:18 +0300},
	date-modified = {2026-05-06 15:55:16 +0300},
	note = {Reprinted in Wiley Classics Library, John Wiley \& Sons, 1996},
	publisher = {Interscience Publishers},
	title = {Foundations of Differential Geometry},
	url = {https://books.google.ch/books?id=U07vAAAAMAAJ},
	volume = {2},
	year = {1969},
	bdsk-url-1 = {https://books.google.ch/books?id=U07vAAAAMAAJ}}

@article{Dragovic14,
	author = {Dragovi{\'c}, V.},
	date-added = {2025-10-12 15:01:29 +0300},
	date-modified = {2025-10-15 19:46:51 +0300},
	eprint = {https://doi.org/10.3934/jgm.2014.6.319},
	journal = {Journal of Geometric Mechanics},
	number = {3},
	pages = {319--333},
	title = {{Discriminantly Separable Polynomials and Quad-Equations}},
	volume = {6},
	year = {2014}}

@article{Dragovic10,
	author = {Dragovi{\'c}, V.},
	date-added = {2025-10-12 14:56:35 +0300},
	date-modified = {2025-10-15 19:42:04 +0300},
	eprint = {https://doi.org/10.1007/s00220-010-1066-z},
	journal = {Communications in Math. Phys.},
	number = {1},
	pages = {37-64},
	title = {{Generalization and Geometrization of the Kowalevski Top}},
	volume = {298},
	year = {2010}}

@book{Hartshorne,
	author = {Hartshorne, R.},
	date-added = {2025-10-10 21:12:25 +0300},
	date-modified = {2025-10-10 21:13:36 +0300},
	eprint = {https://doi.org/10.1007/978-1-4757-3849-0},
	publisher = {Springer New York, NY},
	series = {Graduate Texts in Mathematics},
	title = {Algebraic Geometry},
	year = {1977}}

@book{Dolgachev,
	author = {Dolgachev, I. V.},
	date-added = {2025-10-09 23:32:14 +0300},
	date-modified = {2025-10-09 23:33:06 +0300},
	eprint = {https://doi.org/10.1017/CBO9781139084437},
	place = {Cambridge},
	publisher = {Cambridge University Press},
	title = {Classical Algebraic Geometry: A Modern View},
	year = {2012}}

@book{Fulton,
	author = {Fulton, W. and Weiss, R.},
	date-added = {2025-10-09 23:29:00 +0300},
	date-modified = {2025-10-09 23:34:07 +0300},
	eprint = {https://api.semanticscholar.org/CorpusID:116886820},
	publisher = {{Addison-Wesley Publishing Company, Inc. The Advanced Book Program}},
	title = {Algebraic Curves: An Introduction to Algebraic Geometry},
	year = {1969},
	bdsk-url-1 = {https://api.semanticscholar.org/CorpusID:116886820}}

@misc{number_theory_lectures,
	author = {Sutherland, A.},
	date-added = {2025-08-25 15:31:00 +0300},
	date-modified = {2025-08-25 15:37:27 +0300},
	eprint = {https://math.mit.edu/classes/18.785/2016fa/lectures.html},
	title = {{An MIT course: Number theory I}},
	year = {2016}}

@article{K_class,
	author = {Kornev, M.},
	date-added = {2025-08-08 20:21:23 +0300},
	date-modified = {2026-05-06 15:43:18 +0300},
	eprint = {https://doi.org/10.48550/arXiv.2508.04454},
	howpublished = {arXiv:2508.04454},
	journal = {arXiv:2508.04454},
	title = {{On the Classification of $n$-Valued Monoids and Groups of Order 3}},
	year = {2025}}

@article{BK,
	author = {Buchstaber, V. and Kornev, M.},
	date-added = {2025-08-08 20:17:57 +0300},
	date-modified = {2025-10-15 19:43:34 +0300},
	eprint = {https://doi.org/10.48550/arXiv.2505.04296},
	journal = {arXiv:2505.04296},
	title = {{$n$-Valued Groups, Kronecker Sums, and Wendt's Matrices}},
	year = {2025}}

@article{Buchstaber_Veselov19,
	author = {Buchstaber, V. M. and Veselov, A. P.},
	date-added = {2025-08-08 20:13:35 +0300},
	date-modified = {2026-05-06 15:27:10 +0300},
	eprint = {https://doi.org/10.1070/RM9886},
	journal = {Russian Mathematical Surveys},
	number = {3},
	pages = {387--430},
	title = {{Conway Topograph, $\mathrm{PGL}_2(\mathbb{Z})$-Dynamics and Two-Valued Groups}},
	volume = {74},
	year = {2019}}

@article{Yagodovskii,
	author = {Gaifullin, A. A. and Yagodovskii, P. V.},
	date-added = {2025-04-30 12:36:04 +0300},
	date-modified = {2025-05-03 15:09:20 +0300},
	eprint = {https://doi.org/10.1070/RM2007v062n01ABEH004388},
	journal = {Russian Math. Surveys},
	number = {1},
	pages = {181--183},
	title = {{Integrability of $m$-valued dynamics by means of single-generated $m$-valued groups}},
	volume = {62},
	year = {2007}}

@article{BuchRees,
	author = {Buchstaber, V. M. and Rees, E. G.},
	date-added = {2025-04-19 13:09:03 +0300},
	date-modified = {2025-05-03 15:00:01 +0300},
	eprint = {https://doi.org/10.1007/BF01234539},
	journal = {Transform. Groups},
	number = {4},
	pages = {325--349},
	title = {{Multivalued groups, their representations and Hopf algebras}},
	volume = {2},
	year = {1997}}

@book{Gelfand,
	author = {Gelfand, I. M. and Kapranov, M. M. and Zelevinsky, A. V.},
	date-added = {2025-04-12 03:12:11 +0300},
	date-modified = {2025-10-14 00:08:11 +0300},
	eprint = {https://doi.org/10.1007/978-0-8176-4771-1},
	publisher = {Birkh{\"a}user Boston, MA},
	series = {Modern Birkh{\"a}user Classics},
	title = {{Discriminants, resultants, and multidimensional determinants}},
	year = {1994},
	bdsk-file-1 = {YnBsaXN0MDDSAQIDBFxyZWxhdGl2ZVBhdGhYYm9va21hcmtfEEguLi8uLi8uLi8uLi8uLi9Eb3dubG9hZHMvRGlzY3JpbWluYW50c19SZXN1bHRhbnRzX2FuZF9NdWx0aWRpbWVucy5iaWJ0ZXhPEQQ0Ym9vazQEAAAAAAUQQAAAAAAAAAAAAAAAAAAAAAAAAAAAAAAAAAAAAAAAAAAAAAAAAAAAAAAAAAAAAAAAAAAAABQDAAAFAAAAAQEAAFVzZXJzAAAACwAAAAEBAABpbmZpbml0eWNhdAAJAAAAAQEAAERvd25sb2FkcwAAAC8AAAABAQAARGlzY3JpbWluYW50c19SZXN1bHRhbnRzX2FuZF9NdWx0aWRpbWVucy5iaWJ0ZXgAEAAAAAEGAAAEAAAAFAAAACgAAAA8AAAACAAAAAQDAACaOAAAAAAAAAgAAAAEAwAAasYDAAAAAAAIAAAABAMAAJ3GAwAAAAAACAAAAAQDAAAMaP8CAAAAABAAAAABBgAAjAAAAJwAAACsAAAAvAAAAAgAAAAABAAAQcbU86oyHH0YAAAAAQIAAAEAAAAAAAAADwAAAAAAAAAAAAAAAAAAAAgAAAAEAwAAAgAAAAAAAAAEAAAAAwMAAPUBAAAIAAAAAQkAAGZpbGU6Ly8vDAAAAAEBAABNYWNpbnRvc2ggSEQIAAAABAMAAACQgpbnAAAACAAAAAAEAABBx6dTdgAAACQAAAABAQAAQTRERDQ2MEMtODUxQy00RjJBLUE1MTYtRTgwMDRCNzBGRTg4GAAAAAECAACBAAAAAQAAAO8TAAABAAAAAAAAAAAAAAABAAAAAQEAAC8AAAAAAAAAAQUAAP4AAAABAgAAN2ZhNjg2OTYxMGRhMzY4NjY3ZGRhN2Q1ODJhOTczNjJmZjkyMDIwNTI1ZTM5OGNkYjZmZDIxYzMwNmM5YzUzYTswMDswMDAwMDAwMDswMDAwMDAwMDswMDAwMDAwMDswMDAwMDAwMDAwMDAwMDIwO2NvbS5hcHBsZS5hcHAtc2FuZGJveC5yZWFkLXdyaXRlOzAxOzAxMDAwMDBlOzAwMDAwMDAwMDJmZjY4MGM7NDY7L3VzZXJzL2luZmluaXR5Y2F0L2Rvd25sb2Fkcy9kaXNjcmltaW5hbnRzX3Jlc3VsdGFudHNfYW5kX211bHRpZGltZW5zLmJpYnRleAAAAC8AAAABAQAATlNVUkxCb29rbWFya1F1YXJhbnRpbmVNb3VudGVkTmV0d29ya1ZvbHVtZXNLZXkA2AAAAP7///8BAAAAAAAAABEAAAAEEAAAdAAAAAAAAAAFEAAAzAAAAAAAAAAQEAAA9AAAAAAAAABAEAAA5AAAAAAAAAACIAAAwAEAAAAAAAAFIAAAMAEAAAAAAAAQIAAAQAEAAAAAAAARIAAAdAEAAAAAAAASIAAAVAEAAAAAAAATIAAAZAEAAAAAAAAgIAAAoAEAAAAAAAAwIAAAzAEAAAAAAAABwAAAFAEAAAAAAAARwAAAFAAAAAAAAAASwAAAJAEAAAAAAACA8AAA1AEAAAAAAADcAgCAzAEAAAAAAAAACAANABoAIwBuAAAAAAAAAgEAAAAAAAAABQAAAAAAAAAAAAAAAAAABKY=}}

@article{Posadskiy,
	author = {Konstantin M. Posadskiy},
	date-added = {2025-04-05 21:18:47 +0300},
	date-modified = {2026-05-06 15:16:21 +0300},
	eprint = {https://doi.org/10.48550/arXiv.2504.02635},
	journal = {arXiv:2504.02635},
	title = {{Continuous two-valued discrete-time dynamical systems and actions of two-valued groups}},
	year = {2025},
	bdsk-url-1 = {https://arxiv.org/abs/2504.02635}}

@article{Buchstaber_Veselov,
	author = {Buchstaber, V. M. and Veselov, A. P.},
	date-added = {2025-04-04 23:12:12 +0300},
	date-modified = {2025-05-03 15:00:17 +0300},
	doi = {10.1155/S1073792896000256},
	eprint = {https://doi.org/10.1155/S1073792896000256},
	issn = {1073-7928},
	journal = {International Mathematics Research Notices},
	month = {01},
	number = {8},
	pages = {381-400},
	title = {{Integrable correspondences and algebraic representations of multivalued groups}},
	volume = {1996},
	year = {1996},
	bdsk-url-1 = {https://doi.org/10.1155/S1073792896000256}}

@article{Buchstaber_Novikov,
	author = {Buchstaber, V. M. and Novikov, S. P.},
	date-added = {2025-04-04 23:05:18 +0300},
	date-modified = {2026-05-06 15:42:08 +0300},
	eprint = {https://doi.org/10.1070/SM1971v013n01ABEH001030},
	journal = {Math. USSR-Sb.},
	number = {1},
	pages = {80--116},
	title = {{Formal groups, power systems and Adams operators}},
	volume = {13},
	year = {1971},
	bdsk-file-1 = {YnBsaXN0MDDSAQIDBFxyZWxhdGl2ZVBhdGhYYm9va21hcmtvEEEALgAuAC8ALgAuAC8ALgAuAC8ALgAuAC8AbgB2AGEAbABfAHAAbwBsAHkAbgBvAG0AaQBhAGwAcwAvAG4AdgBhAGwAXwBwAG8AbAB5AG4AbwBtAGkAYQBsAHMAXwBuAG8AdABlAHMALwByAHUAcwAvAGgAagRAAGgAZwAuAGIAaQBiTxEEnGJvb2ucBAAAAAAFEEAAAAAAAAAAAAAAAAAAAAAAAAAAAAAAAAAAAAAAAAAAAAAAAAAAAAAAAAAAAAAAAAAAAAB8AwAABQAAAAEBAABVc2VycwAAAAsAAAABAQAAaW5maW5pdHljYXQACQAAAAEBAABEb2N1bWVudHMAAAAQAAAAAQEAAG52YWxfcG9seW5vbWlhbHMWAAAAAQEAAG52YWxfcG9seW5vbWlhbHNfbm90ZXMAAAMAAAABAQAAcnVzAAoAAAABAQAAaGrRgGhnLmJpYgAAHAAAAAEGAAAEAAAAFAAAACgAAAA8AAAAVAAAAHQAAACAAAAACAAAAAQDAACaOAAAAAAAAAgAAAAEAwAAasYDAAAAAAAIAAAABAMAAOYnBAAAAAAACAAAAAQDAAAS8ewCAAAAAAgAAAAEAwAAIKDtAgAAAAAIAAAABAMAAKclJgMAAAAACAAAAAQDAADJIPgCAAAAABwAAAABBgAAuAAAAMgAAADYAAAA6AAAAPgAAAAIAQAAGAEAAAgAAAAABAAAQcbKYP0bdPcYAAAAAQIAAAEAAAAAAAAADwAAAAAAAAAAAAAAAAAAAAgAAAAEAwAABQAAAAAAAAAEAAAAAwMAAPUBAAAIAAAAAQkAAGZpbGU6Ly8vDAAAAAEBAABNYWNpbnRvc2ggSEQIAAAABAMAAACQgpbnAAAACAAAAAAEAABBx6dTdgAAACQAAAABAQAAQTRERDQ2MEMtODUxQy00RjJBLUE1MTYtRTgwMDRCNzBGRTg4GAAAAAECAACBAAAAAQAAAO8TAAABAAAAAAAAAAAAAAABAAAAAQEAAC8AAAAAAAAAAQUAABoAAAABAQAATlNVUkxEb2N1bWVudElkZW50aWZpZXJLZXkAAAQAAAADAwAAqtAAAAUBAAABAgAAMTliMjFiZDdlOTRhMTEyN2IyZGFjODBmZTRiMzQyMTM0YmM5ZGQxYmVjNDUxOWQxN2ZmZGFjMDEzMDRmM2E3MDswMDswMDAwMDAwMDswMDAwMDAwMDswMDAwMDAwMDswMDAwMDAwMDAwMDAwMDIwO2NvbS5hcHBsZS5hcHAtc2FuZGJveC5yZWFkLXdyaXRlOzAxOzAxMDAwMDBlOzAwMDAwMDAwMDJmODIwYzk7NTc7L3VzZXJzL2luZmluaXR5Y2F0L2RvY3VtZW50cy9udmFsX3BvbHlub21pYWxzL252YWxfcG9seW5vbWlhbHNfbm90ZXMvcnVzL2hq0YBoZy5iaWIAAAAA2AAAAP7///8BAAAAAAAAABEAAAAEEAAAlAAAAAAAAAAFEAAAKAEAAAAAAAAQEAAAXAEAAAAAAABAEAAATAEAAAAAAAACIAAAKAIAAAAAAAAFIAAAmAEAAAAAAAAQIAAAqAEAAAAAAAARIAAA3AEAAAAAAAASIAAAvAEAAAAAAAATIAAAzAEAAAAAAAAgIAAACAIAAAAAAAAwIAAANAIAAAAAAAABwAAAfAEAAAAAAAARwAAAFAAAAAAAAAASwAAAjAEAAAAAAACA8AAAbAIAAAAAAAA8AgCAYAIAAAAAAAAACAANABoAIwCoAAAAAAAAAgEAAAAAAAAABQAAAAAAAAAAAAAAAAAABUg=}}

@article{Gaiur,
	author = {Gaiur, I. and Rubtsov, V. and Straten, D.},
	date-added = {2025-04-04 23:01:46 +0300},
	date-modified = {2026-05-06 15:17:32 +0300},
	eprint = {https://doi.org/10.1007/s00029-025-01114-w},
	journal = {Selecta Mathematica},
	number = {12},
	title = {{Product Formulas for the Higher Bessel functions}},
	volume = {32},
	year = {2026},
	bdsk-url-1 = {https://arxiv.org/abs/2405.03015}}

@book{Lang,
	abstract = {This book is intended as a basic text for a one-year course in Algebra at the graduate level, or as a useful reference for mathematicians and professionals who use higher-level algebra. It successfully addresses the basic concepts of algebra. For the revised third edition, the author has added exercises and made numerous corrections to the text. Comments on Serge Lang's Algebra: Lang's Algebra changed the way graduate algebra is taught, retaining classical topics but introducing language and ways of thinking from category theory and homological algebra. It has affected all subsequent graduate-level algebra books. April 1999 Notices of the AMS, announcing that the author was awarded the Leroy P. Steele Prize for Mathematical Exposition for his many mathematics books. The author has an impressive knack for presenting the important and interesting ideas of algebra in just the "right" way, and he never gets bogged down in the dry formalism which pervades some parts of algebra. MathSciNet's review of the first edition.},
	added-at = {2015-09-12T14:13:44.000+0200},
	address = {New York, NY},
	author = {Lang, Serge},
	biburl = {https://www.bibsonomy.org/bibtex/2a29d0a9dddd526d758757cd99a8609fc/ytyoun},
	date-added = {2025-04-02 20:19:25 +0300},
	date-modified = {2025-05-03 14:59:36 +0300},
	eprint = {https://doi.org/10.1007/978-1-4613-0041-0},
	interhash = {2575569df400d3a9e85814b5e2397c11},
	intrahash = {a29d0a9dddd526d758757cd99a8609fc},
	isbn = {9781461300410 146130041X},
	keywords = {textbook},
	publisher = {Springer},
	refid = {852792828},
	timestamp = {2015-09-12T14:14:10.000+0200},
	title = {{Algebra}},
	year = 2002}

@article{Buchstaber,
	author = {Buchstaber, V. M.},
	date-added = {2025-04-01 04:43:39 +0300},
	date-modified = {2025-10-15 19:41:01 +0300},
	eprint = {https://doi.org/10.17323/1609-4514-2006-6-1-57-84},
	journal = {Mosc. Math. J.},
	number = {1},
	pages = {57--84},
	title = {{$n$-Valued Groups: Theory and Applications}},
	volume = {6},
	year = {2006}}

\newpage

\vspace{1em}
\noindent
\textit{Victor Buchstaber}\\
Steklov Mathematical Institute of Russian Academy of Sciences\\
Email: \texttt{buchstab@mi-ras.ru}

\vspace{1em}
\noindent
\textit{Mikhail Kornev}\\
Steklov Mathematical Institute of Russian Academy of Sciences\\
Email: \texttt{mkorneff@mi-ras.ru}

\end{document}